\pdfoutput=1 

\documentclass[11pt]{amsart}

\usepackage{amssymb,amsthm,enumitem,colonequals,tikz-cd,microtype}
\usepackage[normalem]{ulem} 

\usepackage[osf]{Baskervaldx}
\usepackage[baskervaldx]{newtxmath}
\usepackage[cal=boondoxo]{mathalfa} 

\usepackage[top=3.75cm, bottom=3cm, left=3.5cm, right=3.5cm]{geometry}

\usepackage{xcolor}
\colorlet{darkblue}{blue!55!black}
\colorlet{darkcyan}{cyan!50!black}
\colorlet{darkgreen}{green!60!black}

\PassOptionsToPackage{hyphens}{url}
\usepackage{hyperref}
\hypersetup{
    colorlinks=true,
    linkcolor=darkblue,
    urlcolor=darkcyan,
    citecolor=darkgreen,
}

\def\eqref#1{\textcolor{darkblue}{(\ref{#1})}}

\usepackage[nameinlink]{cleveref} 
\Crefformat{section}{#2\S#1#3}
\Crefmultiformat{section}{#2\S\S#1#3}{ and~#2#1#3}{, #2#1#3}{, and~#2#1#3}

\usepackage[pagewise]{lineno}
\let\oldequation\equation
\let\oldendequation\endequation
\renewenvironment{equation}{\linenomathNonumbers\oldequation}{\oldendequation\endlinenomath}
\expandafter\let\expandafter\oldequationstar\csname equation*\endcsname
\expandafter\let\expandafter\oldendequationstar\csname endequation*\endcsname
\renewenvironment{equation*}{\linenomathNonumbers\oldequationstar}{\oldendequationstar\endlinenomath}
\let\oldalign\align
\let\oldendalign\endalign
\renewenvironment{align}{\linenomathNonumbers\oldalign}{\oldendalign\endlinenomath}
\expandafter\let\expandafter\oldalignstar\csname align*\endcsname
\expandafter\let\expandafter\oldendalignstar\csname endalign*\endcsname
\renewenvironment{align*}{\linenomathNonumbers\oldalignstar}{\oldendalignstar\endlinenomath}

\theoremstyle{plain}
\newtheorem{theorem}{Theorem}[section]
\newtheorem{lemma}[theorem]{Lemma}

\newtheorem{proposition}[theorem]{Proposition}

\theoremstyle{definition}

\newtheorem{remark}[theorem]{Remark}
\newtheorem{setup}[theorem]{Setup}

\newtheorem*{ack}{Acknowledgments}

\AddToHook{env/conjecture/begin}{\crefalias{theorem}{conjecture}}
\AddToHook{env/lemma/begin}{\crefalias{theorem}{lemma}}
\AddToHook{env/corollary/begin}{\crefalias{theorem}{corollary}}
\AddToHook{env/proposition/begin}{\crefalias{theorem}{proposition}}
\AddToHook{env/definition/begin}{\crefalias{theorem}{definition}}
\AddToHook{env/remark/begin}{\crefalias{theorem}{remark}}
\AddToHook{env/setup/begin}{\crefalias{theorem}{setup}}
\AddToHook{env/example/begin}{\crefalias{theorem}{example}}
\AddToHook{env/hypothesis/begin}{\crefalias{theorem}{hypothesis}}

\numberwithin{equation}{section}
\numberwithin{theorem}{section}

\title[A blowup criterion for regularity]{A blowup criterion for regularity}

\author[P.~Lank]{Pat Lank}
\address{P.~Lank,
Dipartimento di Matematica “F. Enriques”, Universit\`{a} degli Studi di Milano, Via Cesare
Saldini 50, 20133 Milano, Italy}
\email{plankmathematics@gmail.com}

\date{\today}

\keywords{Regular rings, blowups, perfect complexes, derived categories}

\subjclass[2020]{14F08 (primary), 14B05, 14A30, 14E15} 

\begin{document}
    
\begin{abstract}
    We prove that a Noetherian integral domain is regular if, and only if, the derived pushforward of the structure sheaf along every blowup is a perfect complex. 
    For suitable local rings, we give a construction which realizes a shift of the residue field as a direct summand of the derived pushforward of the structure sheaf along an explicit blowup.

\end{abstract}

\maketitle

\tableofcontents

\section{Introduction}
\label{sec:intro}

\subsection{Background}
\label{sec:intro_background}

In \cite{Ma/Schwede:2020}, Ma and Schwede gave a characterization of regularity.
Let $R$ be a local domain essentially of finite type over a field of characteristic zero.
Set $X:=\operatorname{Spec}(R)$.
They showed that $R$ is regular if, and only if, $\mathbf{R}f_\ast \mathcal{O}_Y \in \operatorname{Perf}(X)$ for all regular alterations $f\colon Y \to X$.

Their approach uses alterations and builds on the relationship between multiplier ideals and test ideals \cite{Blickle/Schwede/Tucker:2015}.
Also, their work is guided by the connections between big Cohen--Macaulay algebras and resolutions of singularities.
See \cite[Proposition 3.20]{Iyengar/Ma/Schwede/Walker:2021} and \cite[Theorem 3.3]{Roberts:1980}.

In \cite{Kunz:1969}, Kunz showed that a Noetherian ring $R$ of prime characteristic is regular if, and only if, its Frobenius morphism $F\colon R \to R$ is flat. 
The prime characteristic analogs of Ma and Schwede's result \cite{Ma/Schwede:2020} follow from Bhatt's work on killing cohomology by finite covers \cite{Bhatt:2012} and the characterizations of regularity by absolute integral closures \cite{Bhatt/Iyengar/Ma:2019,Aberbach/Li:2008}.

In fact, Bhatt--Iyengar--Ma \cite{Bhatt/Iyengar/Ma:2019} established a $p$-adic analog of Kunz's result using perfectoid algebras.
More recently, Bhatt has shown that regularity of a Noetherian integral affine scheme $X$ can be detected by the existence of a cofinal system of alterations $f\colon Y \to X$ satisfying $\mathbf{R}f_\ast \mathcal{O}_Y \in \operatorname{Perf}(X)$.
See \cite[Theorem 8.5.1]{Bhatt:2025}.

\subsection{Main result}
\label{sec:intro_main_result}

Bhatt's result answers \cite[Question 1.1]{Ma/Schwede:2020} affirmatively and applies more generally to Noetherian integral domains.
Ma and Schwede asked whether regularity can be characterized by the perfectness of derived pushforwards of structure sheaves for regular alterations of rings essentially of finite type over the integers.
In this setting, by the work of de Jong and Gabber (see e.g.\ \cite[Expos\'{e} 0, Th\'{e}or\`{e}me 3(2)]{Illusie/Laszlo/Orgogozo:2014}), there exists a cofinal system of regular alterations.

We give a complementary criterion for regularity.

\begin{theorem}
    \label{thm:characterize_regularity}
    For every Noetherian integral affine scheme $X=\operatorname{Spec}(R)$, the following are equivalent:
    \begin{enumerate}
        \item \label{cor:characterize_regularity1} $X$ is regular
        \item \label{cor:characterize_regularity3} $\mathbf{R}f_\ast \mathcal{O}_{X^\prime} \in \operatorname{Perf}(X)$ for all blowups $f\colon X^\prime \to X$.
    \end{enumerate} 
\end{theorem}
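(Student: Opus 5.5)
The implication (1)$\Rightarrow$(2) is formal. If $X$ is regular Noetherian, then for any blowup $f\colon X'\to X$ (proper, in particular of finite type and quasi-compact quasi-separated), $\mathbf{R}f_\ast\mathcal{O}_{X'}$ has coherent cohomology and is bounded, because $X'$ is covered by finitely many affines. Over a regular Noetherian scheme a bounded complex with coherent cohomology is locally of finite Tor-dimension. To see this, reduce to the local rings $R_\mathfrak{p}$, which have finite global dimension, and note that perfectness is local. Hence $\mathbf{R}f_\ast\mathcal{O}_{X'}\in\operatorname{Perf}(X)$.

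For (2)$\Rightarrow$(1) I plan to argue by contraposition, following the second sentence of the abstract. Suppose $X$ is not regular, and choose a prime $\mathfrak{p}$ with $R_\mathfrak{p}$ singular. Blowups localize: the blowup of $X$ along $I$, base changed to $\operatorname{Spec}R_\mathfrak{p}$, is the blowup along $IR_\mathfrak{p}$. Every ideal of $R_\mathfrak{p}$ is of the form $IR_\mathfrak{p}$ with $I\subseteq R$, and flat base change commutes with $\mathbf{R}f_\ast$. So it suffices to treat $R$ local with maximal ideal $\mathfrak{m}$ and residue field $k$, a singular Noetherian local domain. The goal is to find an ideal $I$ whose blowup $f\colon X'\to X$ has $k[n]$ as a direct summand of $\mathbf{R}f_\ast\mathcal{O}_{X'}$ for some $n$. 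Since direct summands of perfect complexes are perfect, this would make $k$ perfect, so $\operatorname{pd}_R k<\infty$ and $R$ is regular by Auslander--Buchsbaum--Serre, a contradiction.

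To construct the blowup, I would first try the blowup of $\mathfrak{m}$ itself, or of a suitable power $\mathfrak{m}^N$ (these have the same blowup), where $X'=\operatorname{Proj}\bigoplus_{n\ge0}\mathfrak{m}^n$. Its cohomology $H^i(X',\mathcal{O}_{X'}(n))$ is computed by the local cohomology of the Rees algebra along its irrelevant ideal. Let $d=\dim R$. The Sancho de Salas / Lipman sequence relates $H^i(X',\mathcal{O}_{X'})$ to $H^{i+1}_{\mathfrak{m}}$ of the associated graded ring. The hope is that, for $N\gg0$, the higher cohomology $\mathbf{R}^{d-1}f_\ast\mathcal{O}_{X'}$ is a finite-length module, namely a degree-$0$ piece of top local cohomology of $\operatorname{gr}_\mathfrak{m}R$, with a socle splitting off. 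One would then produce an actual splitting of $k[-(d-1)]$ in $D(R)$ using the truncation triangle $\tau^{<d-1}\to\mathbf{R}f_\ast\mathcal{O}_{X'}\to\mathbf{R}^{d-1}f_\ast\mathcal{O}_{X'}[-(d-1)]$.

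The main obstacle is exactly this step: guaranteeing a nonzero $k$-summand rather than merely nonzero higher cohomology. A triangle does not split automatically, and the blowup of $\mathfrak{m}$ may even be rational, giving $\mathbf{R}f_\ast\mathcal{O}=R$. In that case I would instead blow up a non-reduced or non-$\mathfrak{m}$-primary ideal chosen to create a $k$ summand; this is presumably the paper's "explicit blowup for suitable local rings". A cleaner fallback that avoids splitting is to show perfectness of all $\mathbf{R}f_\ast\mathcal{O}$ forces $k$ into the thick closure of $\operatorname{Perf}$. The idea is to use that finitely many blowups generate $D^b(\operatorname{coh})$ up to summands, via Kollár-type projective-bundle/closed-subscheme dévissage: $\mathcal{O}_{X'}(-nE)$ pushes forward to $\mathfrak{m}^n$-type ideals, and $R/\mathfrak{m}$ arises from cones. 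I expect reducing to one of these two mechanisms, and checking that twists $\mathcal{O}(n)$ can be replaced by structure sheaves of other blowups, to be where the real work lies.
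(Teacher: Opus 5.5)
Your proof of (1)$\Rightarrow$(2) matches the paper, as do the reduction of (2)$\Rightarrow$(1) to a local domain (via extended ideals and flat base change) and the endgame, where a shifted copy of $k$ as a direct summand of a perfect complex forces $\operatorname{pd}_R k<\infty$. The gap is the central step. You never produce a blowup $f$ with $k[n]$ a summand of $\mathbf{R}f_\ast\mathcal{O}_{X'}$, and neither mechanism you propose supplies one.

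\emph{Blowing up $\mathfrak{m}$ fails in general, not only in corner cases.} Take the cone over the twisted cubic, localized at the vertex. Blowing up $\mathfrak{m}$ (equivalently any $\mathfrak{m}^N$) gives a regular scheme, the total space of $\mathcal{O}_{\mathbb{P}^1}(-3)$ over the local base. Here $\mathbf{R}f_\ast\mathcal{O}_{X'}\cong\mathcal{O}_X$ is perfect, although $R$ is singular. More conceptually, in characteristic zero, any blowup of a rational singularity whose total space again has rational singularities has $\mathbf{R}f_\ast\mathcal{O}_{X'}\cong\mathcal{O}_X$. So the witnessing blowup must itself be badly singular, e.g.\ non-normal.

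\emph{The fallback is unproved.} You assert that $k$ lies in the thick closure of the objects $\mathbf{R}f_\ast\mathcal{O}_{X'}$, but the d\'evissage you sketch produces twists and ideal-type objects, not structure sheaves of blowups, as you note yourself.

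Two concrete ideas are missing.

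\emph{Normality first.} For $x\in\operatorname{Frac}(R)$ integral over $R$, the map $\operatorname{Spec}R[x]\to\operatorname{Spec}R$ is finite birational, hence projective birational, hence a blowup. If its pushforward is perfect, the unit $\mathcal{O}_X\to f_\ast\mathcal{O}_Y$ splits (\Cref{lem:perfect_finite_modification}). Since $f_\ast\mathcal{O}_Y$ is torsion-free and the cokernel is torsion, the unit is an isomorphism, so $R$ is normal. This settles $\dim R\le 1$. When $\dim R\ge 2$, Serre's criterion gives a regular sequence $a,b\in\mathfrak{m}$, which is input your plan never produces.

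\emph{The explicit ideal.} Set $J=(a^3,ab^2,b^3)+a^2b\,\mathfrak{m}$ and write $\operatorname{Bl}_I X$ for the blowup along $I$.
\begin{enumerate}
\item Since $(a^2b)^2=a^3\cdot ab^2$, $J$ is a reduction of $P=(a,b)^3$, so $v\colon\operatorname{Bl}_P X\to\operatorname{Bl}_J X$ is finite birational.
\item The cokernel of $\mathcal{O}_{\operatorname{Bl}_J X}\to v_\ast\mathcal{O}_{\operatorname{Bl}_P X}$ vanishes on every chart except $d=a^3$. There, with $t=b/a$, it is $R[t]/R[t^2,t^3,\mathfrak{m}t]\cong k$, coming from a cuspidal, non-normal point.
\item Since $\operatorname{Bl}_P X=\operatorname{Bl}_{(a,b)}X$ and $(a,b)$ is Koszul-regular, $\mathbf{R}(f_\mathcal{P})_\ast\mathcal{O}\cong\mathcal{O}_X$. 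This gives a triangle $\mathbf{R}(f_\mathcal{J})_\ast\mathcal{O}\to\mathcal{O}_X\to\mathbf{R}i_\ast\mathcal{O}_{\operatorname{Spec}(k)}$.
\item The splitting you worried about is then automatic. The unit $\mathcal{O}_X\to\mathbf{R}(f_\mathcal{J})_\ast\mathcal{O}$, followed by the map to $\mathbf{R}(f_\mathcal{P})_\ast\mathcal{O}\cong\mathcal{O}_X$, is an isomorphism. Hence $\mathbf{R}(f_\mathcal{J})_\ast\mathcal{O}\cong\mathcal{O}_X\oplus\mathbf{R}i_\ast\mathcal{O}_{\operatorname{Spec}(k)}[-1]$, with no truncation or socle argument needed.
\end{enumerate}
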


\subsection{Comparisons}
\label{sec:intro_comparison}

\cite[Theorem 8.5.1]{Bhatt:2025} shows that regularity of a Noetherian integral domain is detected by a cofinal system of alterations whose derived pushforwards of structure sheaves are perfect. 
We show that testing all blowups suffices. 
Blowups along nonzero ideals are alterations, but are not cofinal among alterations in general.

\Cref{thm:characterize_regularity} should also be compared with the characterization of regularity by quasi-perfectness of blowups at closed points \cite[Proposition A.3]{DeDeyn/Lank/ManaliRahul:2025b}.
Quasi-perfectness requires that the derived pushforward of any perfect complex remain perfect.
In contrast, we test only the structure sheaf, at the cost of considering all blowups.

\subsection{Auxiliary result}
\label{sec:intro_auxiliary_result}

Our proof of \Cref{thm:characterize_regularity} proceeds by an explicit construction with blowups. 
The perfectoid methods of \cite{Bhatt/Iyengar/Ma:2019,Bhatt:2025} do not appear in our work.
We discuss this below.

The main difficulty is to deduce regularity from the perfectness hypothesis.
We first establish normality using \Cref{lem:perfect_finite_modification}.
This gives an isomorphism criterion for finite birational morphisms in terms of perfectness of pushforward of the structure sheaf.

We establish the following auxiliary result.

\begin{proposition}
    \label{prop:blowup_extensions_for_residue_field}
    Let $(R,\mathfrak{m},k)$ be a Noetherian local domain which admits an $R$-regular sequence $a,b\in \mathfrak{m}$.
    Define the ideal $J:=(a^3,ab^2,b^3)+ a^2 b \mathfrak{m}$.
    Set $X=\operatorname{Spec}(R)$ and $i\colon \operatorname{Spec}(k) \to X$ to be the associated closed immersion.
    Denote by $f_{\mathcal{J}} \colon \underline{\operatorname{Proj}}_X (\bigoplus^\infty_{n=0} \mathcal{J}^n) \to X$ the blowup of $X$ along $\mathcal{J}$.
    There exists an isomorphism
    \begin{displaymath}
        \mathbf{R}(f_\mathcal{J})_\ast \mathcal{O}_{\underline{\operatorname{Proj}}_X (\bigoplus^\infty_{n=0} \mathcal{J}^n)} \cong \mathbf{R}i_\ast \mathcal{O}_{\operatorname{Spec}(k)}[-1] \oplus \mathcal{O}_X.
    \end{displaymath}
    Moreover, $X$ is regular if, and only if, $\mathbf{R}(f_\mathcal{J})_\ast \mathcal{O}_{\underline{\operatorname{Proj}}_X (\bigoplus^\infty_{n=0} \mathcal{J}^n)} \in \operatorname{Perf}(X)$.
\end{proposition}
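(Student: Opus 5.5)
The plan is to compare the blowup along $\mathcal{J}$ with the blowup along the complete intersection ideal $I_0:=(a,b)$. Set $I:=I_0^3=(a^3,a^2b,ab^2,b^3)$, so that $J\subseteq I$ and the two differ only in the generator $a^2b$, which has been replaced by $a^2b\mathfrak{m}$. Let $h\colon Y:=\operatorname{Bl}_{I_0}X=\operatorname{Bl}_I X\to X$. Because $a,b$ is a regular sequence, $Y\subset \mathbb{P}^1_X$ is the hypersurface $aT_1-bT_0=0$. The standard computation for blowups of complete intersections then gives $\mathbf{R}h_\ast\mathcal{O}_Y\cong \mathcal{O}_X$. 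Concretely, one can use the Čech complex on the two charts, or the fact that the Rees algebra $R[I_0t]=R[T_0,T_1]/(aT_1-bT_0)$ has no higher local cohomology in degree $0$.

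Next I would show that $J$ is a reduction of $I$. On the chart $b=as$ one gets $J\mathcal{O}_Y=a^3(1,s^2,s^3,s\mathfrak{m})=a^3\mathcal{O}_Y$, and symmetrically on the other chart. Hence $J\mathcal{O}_Y=I\mathcal{O}_Y$ is invertible, and $R[It]$ is finite over $R[Jt]$. This yields a finite birational morphism $\pi\colon Y\to X':=\operatorname{Bl}_J X$ over $X$, with $h=f_{\mathcal{J}}\circ\pi$, and $\mathcal{O}_{X'}\to\pi_\ast\mathcal{O}_Y$ is injective since everything is integral. Its cokernel $\mathcal{Q}$ is the sheaf associated to the graded $R[Jt]$-module
\begin{displaymath}
M=R[It]/R[Jt]=\bigoplus_{n\ge1}(I^n/J^n)\,t^n .
\end{displaymath}

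The key computational step, and the one I expect to be the main obstacle, is to show that $I^n/J^n\cong k$ for all $n\geq 1$, spanned by the class of $a^{3n-1}b$. The monomials of degree $3n$ in $a,b$ other than $a^{3n-1}b$ should all lie in $J^n$, while $a^{3n-1}b\cdot\mathfrak{m}\subseteq J^n$; this is the reason for the term $a^2b\mathfrak{m}$. Non-membership of $a^{3n-1}b$ itself should follow by passing to $\operatorname{gr}_{I_0}(R)\supseteq k[\bar a,\bar b]$, which is a polynomial ring because $a,b$ is a regular sequence. I would also need that multiplication by $a^3t$ carries $a^{3n-1}b\,t^n$ to $a^{3n+2}b\,t^{n+1}$, which is nonzero in $M$, and then check that $M\cong k[T]$ with $T=a^3t$, up to a shift and modulo finite-length graded pieces. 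Granting this, $\mathcal{Q}$ is the skyscraper sheaf $k$ at a closed point of $X'$ lying over $\mathfrak{m}$, so $\mathbf{R}(f_{\mathcal{J}})_\ast\mathcal{Q}\cong \mathbf{R}i_\ast\mathcal{O}_{\operatorname{Spec}(k)}$.

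Pushing forward the short exact sequence $0\to\mathcal{O}_{X'}\to\pi_\ast\mathcal{O}_Y\to\mathcal{Q}\to0$ along $f_{\mathcal{J}}$, and using that $\pi$ is finite so that $\mathbf{R}(f_\mathcal{J})_\ast\pi_\ast=\mathbf{R}h_\ast$, gives a distinguished triangle
\begin{displaymath}
\mathbf{R}(f_\mathcal{J})_\ast\mathcal{O}_{X'}\to\mathcal{O}_X\to \mathbf{R}i_\ast\mathcal{O}_{\operatorname{Spec}(k)}\to .
\end{displaymath}
The middle map is zero, because on $H^0$ the composite $R\to H^0(X',\mathcal{O}_{X'})\to H^0(Y,\mathcal{O}_Y)=R$ is the identity. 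Hence $H^0(X',\mathcal{O}_{X'})=R$ and $R\to k$ vanishes, so the triangle splits and gives the claimed isomorphism $\mathbf{R}i_\ast\mathcal{O}_{\operatorname{Spec}(k)}[-1]\oplus\mathcal{O}_X$.

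For the last assertion, if $X$ is regular then every bounded coherent complex is perfect. Conversely, direct summands of perfect complexes are perfect, so perfectness forces $k$ to have finite projective dimension over $R$, and $R$ is regular by Serre's criterion.
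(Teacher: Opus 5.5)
Your proof is correct and has the same skeleton as the paper's. Both compare $\operatorname{Bl}_J X$ with $\operatorname{Bl}_{(a,b)^3}X\cong\operatorname{Bl}_{(a,b)}X$ through the finite birational map given by the reduction $J\subseteq I$. Both identify the cokernel of $\mathcal{O}_{X'}\to\pi_\ast\mathcal{O}_Y$ with a skyscraper $k$ over $\mathfrak{m}$, use $\mathbf{R}h_\ast\mathcal{O}_Y\cong\mathcal{O}_X$, and split the resulting triangle.

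The execution differs in three places:
\begin{itemize}
\item \emph{The cokernel.} The paper works chart by chart. It shows $\frac{a^2b}{d}\in R[\frac{J}{d}]$ for $d\neq a^3$, and $R[t]/R[t^2,t^3,r_1t,\ldots,r_et]\cong k$ on the $a^3$-chart. It then needs a separate support lemma to pass from that chart to all of $X'$. Your global graded computation $I^n/J^n=k\cdot a^{3n-1}b$ does this in one stroke. It actually gives $M\cong k[T](-1)$ exactly, so the ``up to finite length'' caveat is unnecessary: $ab^2t$, $b^3t$ and $a^2b\mathfrak{m}t$ kill $M$, while $a^3t$ maps $M_n$ isomorphically onto $M_{n+1}$.
\item \emph{The splitting.} The paper uses its retract lemma plus two applications of the octahedral axiom. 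Your argument is shorter and equally rigorous: $R=H^0(Y,\mathcal{O}_Y)\to k$ vanishes because $H^0(X',\mathcal{O}_{X'})\to R$ is onto, and $\operatorname{Hom}_{D(R)}(R,E)=H^0(E)$.
\item \emph{The pushforward from $Y$.} Your description of $Y$ as the hypersurface $aT_1-bT_0=0$ in $\mathbb{P}^1_X$ replaces the paper's citation of Thomason's lemma with a short direct computation.
\end{itemize}

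Two small repairs are needed.
\begin{itemize}
\item $\operatorname{gr}_{I_0}(R)\cong(R/I_0)[X,Y]$ need not contain $k$, for instance in mixed characteristic. Run the non-membership argument in the fiber cone $\operatorname{gr}_{I_0}(R)\otimes_R k\cong k[X,Y]$ instead. There $J^n$ lands in the span of the monomials $X^{3n-j}Y^j$ with $j\neq 1$, because $a^2b\mathfrak{m}\subseteq\mathfrak{m}I_0^3$.
\item The reduction property needs no chart computation. From $(a^2b)^2=a^3\cdot ab^2\in J^2$ you get $I^2=JI$ directly. Your route via $J\mathcal{O}_Y=I\mathcal{O}_Y$ also works, but it silently uses the criterion that this equality forces $I\subseteq\overline{J}$.
\end{itemize}
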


\Cref{prop:blowup_extensions_for_residue_field} shows that regularity of $R$ is controlled by a single explicit blowup.
Moreover, the shifted residue field is the obstruction to the perfectness of $\mathbf{R}(f_\mathcal{J})_\ast \mathcal{O}_{\underline{\operatorname{Proj}}_X (\bigoplus^\infty_{n=0} \mathcal{J}^n)}$. 
This is akin to the Auslander--Buchsbaum--Serre criterion for regularity.

Back to the proof of \Cref{thm:characterize_regularity}.
We use that every ideal of a localization is extended from the original ring. 
Compatibility of blowups and derived pushforwards with flat base change shows the hypothesis in \Cref{thm:characterize_regularity} holds for the local rings. 
After normality is established, local rings of Krull dimension at most are regular.
In higher Krull dimension, \Cref{prop:blowup_extensions_for_residue_field} and the Auslander--Buchsbaum--Serre criterion apply.

\begin{ack}
    The author was supported by the ERC Advanced Grant 101095900-TriCatApp, and thanks Bhargav Bhatt and Karl Schwede for helpful discussions.
\end{ack}

\section{Preliminaries}
\label{sec:prelim}

We follow the conventions of \cite{stacks-project}.
Let $X$ be a Noetherian scheme. 

\subsection{Categories}
\label{sec:prelim_categories}

$\operatorname{Mod}(X)$ is the Grothendieck abelian category of sheaves of $\mathcal{O}_X$-modules on the small Zariski site $X_{\textrm{Zar}}$ of $X$.
$\operatorname{Qcoh}(X)$ is the strictly full subcategory of $\operatorname{Mod}(X)$ consisting of quasi-coherent sheaves. 
$D(X) := D(\operatorname{Mod}(X))$ is the derived category of $\operatorname{Mod}(X)$. 
$D_{\operatorname{qc}}(X)$ is the strictly full subcategory of $D(X)$ consisting of complexes with quasi-coherent cohomology sheaves. 
$\operatorname{Perf}(X)$ is the strictly full subcategory of perfect complexes in $D_{\operatorname{qc}}(X)$. 
$D^b_{\operatorname{coh}}(X)$ is the strictly full subcategory of $D_{\operatorname{qc}}(X)$ consisting of bounded pseudocoherent complexes. 
When $X$ is affine, we will sometimes abuse notation by writing $D^b_{\operatorname{coh}}(R):=D^b_{\operatorname{coh}}(X)$ where $R:=H^0(X,\mathcal{O}_X)$.
Similar conventions will occur for other categories or numerical invariants.

\subsection{Support}
\label{sec:prelim_support}

For $E\in D_{\operatorname{qc}}(X)$, set $\operatorname{Supp}(E) := \cup_{i\in \mathbb{Z}} \operatorname{Supp}(\mathcal{H}^i (E))$.
If $Z\subseteq X$ is a closed subset, then $D_{\operatorname{qc},Z}(X)$ is the strictly full subcategory of $D_{\operatorname{qc}}(X)$ consisting of objects $E$ such that $\operatorname{Supp}(E) \subseteq Z$.
Similar notation is defined for subcategories of $D^b_{\operatorname{coh}}(X)$. 

\subsection{Ideal-theoretic reductions}
\label{sec:prelim_categories_reductions}

If $\mathcal{P}\subseteq \mathcal{O}_X$ is an ideal sheaf, we let 
\begin{displaymath}
    f_\mathcal{P} \colon \underline{\operatorname{Proj}}_X (\bigoplus^\infty_{n=0} \mathcal{P}^n) \to X
\end{displaymath}
be the blowup of $X$ along $\mathcal{P}$.
Given a pair of ideal sheaves $\mathcal{P}\subseteq \mathcal{J}\subseteq \mathcal{O}_X$, we say that $\mathcal{P}$ is a \textbf{reduction} of $\mathcal{J}$ if $\mathcal{P}\mathcal{J}^N = \mathcal{J}^{N+1}$ for some $N>0$.

\subsection{Regular sequences}
\label{sec:prelim_regular_sequences}

Let $(R,\mathfrak{m})$ be a Noetherian local ring. 
A sequence $r_1,\ldots,r_n \in R$ is called an \textbf{$R$-regular sequence} if $r_i$ is a nonzerodivisor on $R/(r_1,\ldots, r_{i-1})$ for each $i=1,\ldots, n$ and the $R$-module $R/(r_1,\ldots,r_n)$ is nonzero.
Some authors omit the condition that $R/(r_1,\ldots,r_n)$ is not zero.
We follow \cite[\href{https://stacks.math.columbia.edu/tag/00LF}{Tag 00LF}]{stacks-project}.
With this convention, $R$-regular sequences need not be preserved under localization.
The \textbf{depth} of $R$ is the supremum in $\{0,1,\ldots,\infty\}$ of the lengths of $R$-regular sequences in $\mathfrak{m}$.
See \cite[\href{https://stacks.math.columbia.edu/tag/00LE}{Tag 00LE}]{stacks-project}.

\section{Blowups}
\label{sec:blowup}

Throughout, we use the following notation.

\begin{setup}
    \label{setup:main}
    Suppose $(R,\mathfrak{m},k)$ is a Noetherian local domain which admits an $R$-regular sequence $a,b\in \mathfrak{m}$. 
    Let $X:=\operatorname{Spec}(R)$, $i \colon \operatorname{Spec}(k) \to X$ be the associated closed immersion, and $c:=a^2 b$.
    Choose a minimal set of generators $r_1,\ldots,r_e$ for $\mathfrak{m}$.
    Define the ideals $P=(a,b)^3$ and $J=(a^3,ab^2,b^3)+ a^2 b \mathfrak{m}$ of $R$.
    Denote by $\mathcal{P}$ and $\mathcal{J}$ respectively for the sheafifications of $P$ and $J$.
\end{setup}

\begin{lemma}
    \label{lem:reduction_ideals}
    Assume \Cref{setup:main}. 
    Then $J$ is a reduction of $P$.
\end{lemma}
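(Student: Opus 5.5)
First I check that $J\subseteq P$, and then I show $JP^N=P^{N+1}$ for a small explicit $N$. Of the generators of $J$, the elements $a^3,ab^2,b^3$ are cubic monomials in $a,b$, so they lie in $P=(a,b)^3$. The remaining part is $a^2b\mathfrak{m}$, and $a^2b\in P$ gives $a^2b\mathfrak{m}\subseteq P$. Hence $J\subseteq P$, and $JP^N\subseteq P^{N+1}$ holds for every $N$. The content is therefore the reverse inclusion.

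Set $Q=(a^3,ab^2,b^3)\subseteq J$. It suffices to show $P^{N+1}=QP^N$ for some $N$, because then $P^{N+1}=QP^N\subseteq JP^N$. The ideal $P^{n}$ is generated by the monomials $a^ib^j$ with $i+j=3n$. So the question is purely about monomials in $a$ and $b$: I want every monomial of degree $3(N+1)$ to be an $R$-multiple of the form $q\cdot m$, with $q\in\{a^3,ab^2,b^3\}$ and $m$ a monomial of degree $3N$. Such a factorization exists for $a^ib^j$ with $i+j=3(N+1)$ whenever $i\ge3$ (take $q=a^3$), or $j\ge3$ (take $q=b^3$), or $i\ge1,j\ge2$ (take $q=ab^2$). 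If $N\ge1$, then $i+j\ge6$, so one of $i\ge3$ or $j\ge3$ must hold. Thus $N=1$ already works, and we get $P^2=QP\subseteq JP$. No regularity of $a,b$ is needed here.

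I do not expect a real obstacle; this is an elementary monomial check. The only point needing care is that $a^2b$ itself is not in $Q$ (only $a^2b\mathfrak{m}\subseteq J$). That is why we multiply by $P$ and cannot take $N=0$: indeed $P\neq J$ in general, since $a^2b\notin J$. The degree argument for $N=1$ avoids $a^2b$ entirely. The paper's convention requires $N>0$, and $N=1$ satisfies it.
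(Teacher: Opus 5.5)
Your proof is correct and also gives $N=1$, but your computation is organized differently from the paper's.

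The paper writes $P=J+(c)$ with $c=a^2b$ and observes the single relation $c^2=a^3\cdot ab^2\in J^2$. It then expands
$P^2=J^2+cJ+(c^2)=J^2+cJ=J(J+(c))=JP$.
In other words, it treats $c$ as the one generator of $P$ missing from $J$ and uses that $c$ satisfies a degree-two integral equation over $J$. This is the standard way of producing reductions.

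You instead check the monomial generators of $P^2$ directly by a degree count: every $a^ib^j$ with $i+j=6$ is divisible by $a^3$ or $b^3$, with cofactor in $P$. Your count actually shows slightly more, namely that $(a^3,b^3)$ alone is already a reduction of $P$, since the $ab^2$ case is never used.

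In both arguments the summand $a^2b\mathfrak{m}$ of $J$ plays no role: the paper's relation only uses $a^3,ab^2\in J$. Neither argument needs the regularity of $a,b$.

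Your side remark that $a^2b\notin J$ is true under the setup, because $a,b$ is a regular sequence in a local ring. It is not needed for the lemma, and you did not prove it.
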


\begin{proof}
    We can express $P$ as
    \begin{displaymath}
        P = (a,b)^3 = (a^3, c, ab^2, b^3) = J + (c).
    \end{displaymath}
    Since $c^2 = a^3 (ab^2)\in J^2$, it follows that 
    \begin{displaymath}
        \begin{aligned}
            P^2
            &= (J+(c))^2 
            \\&= J^2 + (c)J + (c^2) 
            \\&= J^2 + (c) J
            \\&= J (J+(c))
            \\&= JP.
        \end{aligned}
    \end{displaymath}
    Thus, $J$ is a reduction of $P$.
\end{proof}

\begin{lemma}
    \label{lem:finite_birational}
    Assume \Cref{setup:main}. 
    There exists a commutative diagram
    \begin{equation}
        \label{eq:finite_birational}
        \begin{tikzcd}
            {\underline{\operatorname{Proj}}_X (\bigoplus^\infty_{n=0} \mathcal{P}^n)} & {\underline{\operatorname{Proj}}_X (\bigoplus^\infty_{n=0} \mathcal{J}^n)} \\
            & {X}
            \arrow["v", from=1-1, to=1-2]
            \arrow["{f_{\mathcal{P}}}"', from=1-1, to=2-2]
            \arrow["{f_{\mathcal{J}}}", from=1-2, to=2-2]
        \end{tikzcd}
    \end{equation}
    where $v$ is a finite birational morphism.
\end{lemma}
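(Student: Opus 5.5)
The plan is to use the standard fact that a reduction induces a finite morphism between blowups. By \Cref{lem:reduction_ideals}, $J\subseteq P$ and $JP^N=P^{N+1}$ for some $N>0$; in fact $JP=P^2$, so one may take $N=1$. The inclusion $J\subseteq P$ gives a graded $R$-algebra map $\phi\colon S:=\bigoplus_n J^n \to T:=\bigoplus_n P^n$.

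The first step is to show that $T$ is a finite $S$-module. The relation $P^{n+1}=J^nP$ for all $n\geq 1$, obtained by induction from $P^2=JP$, shows that $T$ is generated as an $S$-module by $T_0\oplus T_1=R\oplus P$. This is a finitely generated $R$-module, so $T$ is finite over $S$.

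The second step is to check that $\phi$ induces a morphism $v$ of Proj's that is defined everywhere. The irrelevant locus is $V(\phi(S_+)T)\subseteq \operatorname{Proj}(T)$. Since $\phi(S_+)T\supseteq J^nT$ contains $T_{n+1}=P^{n+1}$ for $n\geq 1$, the radical of $\phi(S_+)T$ contains $T_+$. Hence $V(\phi(S_+)T)=\emptyset$, and $v\colon \operatorname{Proj}(T)\to \operatorname{Proj}(S)$ is defined on all of $\operatorname{Proj}(T)$ (Stacks, Tag 01N0). On the affine chart $D_+(x)$ for homogeneous $x\in J^d$, $d\geq 1$, we have $v^{-1}(D_+(x))=D_+(\phi(x))$, and the ring map $S_{(x)}\to T_{(x)}$ is finite because $T$ is finite over $S$. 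This is a standard argument; compare Stacks, Tag 0B5Q, on finite morphisms of Proj induced by finite graded maps. So $v$ is finite, and it commutes with the structure maps to $X$ because $\phi$ is an $R$-algebra map.

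The third step is birationality. $R$ is a domain and $J,P\neq 0$ (they contain $b^3\neq 0$). Hence both blowups are integral schemes with function field $\operatorname{Frac}(R)$, and over the dense open $U=X\setminus V(J)$ both maps to $X$ are isomorphisms. Note that $V(J)=V(P)=V(a,b)$, since the radicals agree. Therefore $v$ restricts to an isomorphism over $U$, so it is birational. Alternatively, $v$ is the map from the blowup of $X$ along $P$ to the blowup along $J$ given by the universal property: $J\mathcal{O}$ is invertible on the $P$-blowup because $J\mathcal{O}\cdot P^N\mathcal{O}=P^{N+1}\mathcal{O}$ with $P\mathcal{O}$ invertible, which forces $J\mathcal{O}=P\mathcal{O}$ there.

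The main obstacle is the finiteness of $v$, namely the verification that integrality of $T$ over $S$ in all degrees descends to the degree-zero localizations. I would handle this via the Stacks reference, or directly: for $y\in T_{md}$, the element $y/\phi(x)^m$ is integral over $S_{(x)}$, since $y$ satisfies a homogeneous integral equation over $S$, and dividing that equation by the appropriate power of $x$ gives an integral equation over $S_{(x)}$.
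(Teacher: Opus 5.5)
Your proof is correct and follows the same route as the paper. Both arguments use the reduction $JP=P^2$ to get a finite morphism $v$ of blowups over $X$, and deduce birationality because $f_{\mathcal{P}}$ and $f_{\mathcal{J}}$ are isomorphisms over the dense open $X\setminus V(J)$. The only difference is that the paper cites \cite[Lemma 4.2]{Abad/Bravo/Villamayor:2020} for finiteness, while you verify it directly from the finiteness of $\bigoplus_n P^n$ over $\bigoplus_n J^n$.
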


\begin{proof}
    By \Cref{lem:reduction_ideals}, $J$ is a reduction of $P$.
    Then \cite[Lemma 4.2]{Abad/Bravo/Villamayor:2020} says there exists a finite morphism $v$ yielding the desired diagram. 
    As $P$ and $J$ are nonzero ideals, integrality of $X$ implies both blowups are integral \cite[\href{https://stacks.math.columbia.edu/tag/02ND}{Tag 02ND}]{stacks-project}.
    Then $f_{\mathcal{P}},f_{\mathcal{J}}$ are respectively isomorphisms over $X\setminus V(P)$ and $X\setminus V(J)$. 
    Consequently, $J\subseteq P$ implies $V(P)\subseteq V(J)$, where $V(-)$ denotes the vanishing locus of an ideal.
    Hence, $v$ restricts to an $X$-isomorphism over $f_{\mathcal{J}}^{-1}(X\setminus V(J))$. 
    Thus, $v$ is birational \cite[\href{https://stacks.math.columbia.edu/tag/0BAD}{Tag 0BAD}]{stacks-project}.
\end{proof}

\begin{remark}
    \label{rmk:open_cover}
    Since $\mathfrak{m}= (r_1,\ldots,r_e)$, it follows that $J = (a^3,ab^2,b^3,c r_1, \ldots, c r_e)$. 
    By \cite[\href{https://stacks.math.columbia.edu/tag/0804}{Tag 0804}]{stacks-project}, $\underline{\operatorname{Proj}}_X (\oplus^\infty_{n=0} \mathcal{J}^n)$ admits an affine open cover given by $\operatorname{Spec}(R[\frac{J}{d}])$ where $d\in \{a^3,ab^2,b^3,c r_1, \ldots, c r_e\}$. 
    Here $R[\frac{J}{d}]\subseteq R_d$ is the $R$-subalgebra generated by $\frac{j}{d}$ with $j\in J$.
    As $J$ is a reduction of $P$, \cite[Remark 4.3]{Abad/Bravo/Villamayor:2020} implies that
    \begin{displaymath}
        v^{-1} (\operatorname{Spec}(R[\frac{J}{d}])) = \operatorname{Spec}(R[\frac{P}{d}]).
    \end{displaymath}
    For each $d\in \{a^3,ab^2,b^3,c r_1, \ldots, c r_e\}$, there exists a fibered square
    \begin{equation}
        \label{eq:open_cover}
        \begin{tikzcd}
            {\operatorname{Spec}(R[\frac{P}{d}])} & {\operatorname{Spec}(R[\frac{J}{d}])} \\
            {\underline{\operatorname{Proj}}_X (\bigoplus^\infty_{n=0} \mathcal{P}^n)} & {\underline{\operatorname{Proj}}_X (\bigoplus^\infty_{n=0} \mathcal{J}^n)}
            \arrow["{v^d}", from=1-1, to=1-2]
            \arrow["{u^P_d}"', from=1-1, to=2-1]
            \arrow["{u_d^J}", from=1-2, to=2-2]
            \arrow["v"', from=2-1, to=2-2]
        \end{tikzcd}
    \end{equation}
    where $u^P_d$ and $u^J_d$ are the associated open immersions.
\end{remark}

\begin{lemma}
    \label{lem:concentrated_degree_zero_global}
    Assume \Cref{setup:main}. 
    There exists an isomorphism
    \begin{displaymath}
        \begin{aligned}
            \operatorname{coker}
            & (\mathcal{O}_{\underline{\operatorname{Proj}}_X (\bigoplus^\infty_{n=0} \mathcal{J}^n)} \to v_\ast \mathcal{O}_{\underline{\operatorname{Proj}}_X (\bigoplus^\infty_{n=0} \mathcal{P}^n)})
            \\&\cong \operatorname{cone}(\mathcal{O}_{\underline{\operatorname{Proj}}_X (\bigoplus^\infty_{n=0} \mathcal{J}^n)} \to \mathbf{R}v_\ast \mathcal{O}_{\underline{\operatorname{Proj}}_X (\bigoplus^\infty_{n=0} \mathcal{P}^n)}).
        \end{aligned}
    \end{displaymath}
    Moreover, for each $d\in \{a^3,ab^2,b^3,c r_1, \ldots, c r_e\}$, there exist isomorphisms
    \begin{equation}
        \label{eq:concentrated_degree_zero_global}
        \begin{aligned}
            \mathbf{L} (u^J_d)^\ast 
            & \operatorname{cone}(\mathcal{O}_{\underline{\operatorname{Proj}}_X (\bigoplus^\infty_{n=0} \mathcal{J}^n)} \to \mathbf{R}v_\ast \mathcal{O}_{\underline{\operatorname{Proj}}_X (\bigoplus^\infty_{n=0} \mathcal{P}^n)}) 
            \\&\cong \operatorname{cone} (\mathcal{O}_{\operatorname{Spec}(R[\frac{J}{d}])} \to \mathbf{R} v^d_\ast \mathcal{O}_{\operatorname{Spec}(R[\frac{P}{d}])})
            \\&\cong \operatorname{coker}(\mathcal{O}_{\operatorname{Spec}(R[\frac{J}{d}])} \to v^d_\ast \mathcal{O}_{\operatorname{Spec}(R[\frac{P}{d}])})
            \\&\cong (u^J_d)^\ast  \operatorname{coker}(\mathcal{O}_{\underline{\operatorname{Proj}}_X (\bigoplus^\infty_{n=0} \mathcal{J}^n)} \to v_\ast \mathcal{O}_{\underline{\operatorname{Proj}}_X (\bigoplus^\infty_{n=0} \mathcal{P}^n)}).
        \end{aligned}
    \end{equation}
    The maps for these cones and cokernels are taken to be the corresponding adjunction units.
\end{lemma}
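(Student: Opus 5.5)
Since $v$ is finite (\Cref{lem:finite_birational}), it is affine, so $\mathbf{R}^q v_\ast \mathcal{O} = 0$ for $q>0$ and the natural map $v_\ast \mathcal{O}_{\underline{\operatorname{Proj}}_X (\bigoplus^\infty_{n=0} \mathcal{P}^n)} \to \mathbf{R}v_\ast \mathcal{O}_{\underline{\operatorname{Proj}}_X (\bigoplus^\infty_{n=0} \mathcal{P}^n)}$ is an isomorphism. The cone of the unit $\mathcal{O} \to v_\ast\mathcal{O}$ then has cohomology $\ker$ in degree $-1$ and $\operatorname{coker}$ in degree $0$. So the first claim reduces to injectivity of this unit. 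Injectivity holds because both blowups are integral (as noted in the proof of \Cref{lem:finite_birational}) and $v$ is birational and dominant, so the map on structure sheaves is injective. Concretely, on each chart $\mathcal{O}$ corresponds to $R[\frac{J}{d}] \to R[\frac{P}{d}]$, which is an inclusion of subrings of $R_d$. A two-term complex with vanishing $\mathcal{H}^{-1}$ is canonically isomorphic to its $\mathcal{H}^0$ placed in degree $0$, which gives the first isomorphism.

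For \eqref{eq:concentrated_degree_zero_global}, I would use that $u^J_d$ is an open immersion, so $\mathbf{L}(u^J_d)^\ast = (u^J_d)^\ast$ is exact and commutes with cones. The fibered square \eqref{eq:open_cover} from \Cref{rmk:open_cover} gives flat base change
\begin{displaymath}
    (u^J_d)^\ast \mathbf{R}v_\ast \mathcal{O} \cong \mathbf{R}v^d_\ast (u^P_d)^\ast \mathcal{O} = \mathbf{R}v^d_\ast \mathcal{O}_{\operatorname{Spec}(R[\frac{P}{d}])}.
\end{displaymath}
This isomorphism is compatible with the adjunction units, since restriction of units along open immersions is again the unit. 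This yields the first isomorphism in \eqref{eq:concentrated_degree_zero_global}. The second isomorphism is the same affine-plus-injectivity argument applied to the finite morphism $v^d$ of affine schemes. The map in question is $R[\frac{J}{d}] \hookrightarrow R[\frac{P}{d}]$ inside $R_d$, and $\mathbf{R}v^d_\ast = v^d_\ast$ on quasi-coherent sheaves. The third isomorphism follows from exactness of $(u^J_d)^\ast$, which commutes with cokernels, together with the same base change $(u^J_d)^\ast v_\ast \cong v^d_\ast (u^P_d)^\ast$.

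The only step needing care is the compatibility of the base change isomorphism with the unit maps, so that the induced maps on cones and cokernels are the intended ones. I would handle this by working in the affine charts, where every map is the explicit ring inclusion $R[\frac{J}{d}] \subseteq R[\frac{P}{d}] \subseteq R_d$.
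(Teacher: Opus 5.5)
Your proposal is correct and follows essentially the same route as the paper. Finiteness gives $v_\ast \mathcal{O} \cong \mathbf{R}v_\ast \mathcal{O}$, and injectivity of the unit identifies the cone with the cokernel. Exactness of $(u^J_d)^\ast$, together with flat base change along the fibered square \eqref{eq:open_cover} compatibly with the units, then gives the chart-wise isomorphisms.

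The differences are cosmetic. You read off injectivity directly from the inclusion $R[\frac{J}{d}] \subseteq R[\frac{P}{d}] \subseteq R_d$ on each chart, whereas the paper uses birationality of $v$ (and of $v^d$, via flat base change) with the cited Stacks Project lemma. The paper also spells out, via Lipman, that the derived unit is the underived unit followed by $v_\ast \to \mathbf{R}v_\ast$; you use this compatibility implicitly.
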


\begin{proof}
    By \Cref{lem:finite_birational}, $v$ is finite and birational.
    Hence, \cite[\href{https://stacks.math.columbia.edu/tag/0G9R}{Tag 0G9R}]{stacks-project} yields that the canonical morphism 
    \begin{displaymath}
        v_\ast \mathcal{O}_{\underline{\operatorname{Proj}}_X (\bigoplus^\infty_{n=0} \mathcal{P}^n)} \to \mathbf{R}v_\ast \mathcal{O}_{\underline{\operatorname{Proj}}_X (\bigoplus^\infty_{n=0} \mathcal{P}^n)}
    \end{displaymath}
    is an isomorphism.
    Moreover, \cite[\href{https://stacks.math.columbia.edu/tag/0CC1}{Tag 0CC1}]{stacks-project} implies 
    \begin{displaymath}
        \mathcal{O}_{\underline{\operatorname{Proj}}_X (\bigoplus^\infty_{n=0} \mathcal{J}^n)} \xrightarrow{unit} v_\ast \mathcal{O}_{\underline{\operatorname{Proj}}_X (\bigoplus^\infty_{n=0} \mathcal{P}^n)}
    \end{displaymath}
    is injective.
    This gives a short exact sequence, and hence a distinguished triangle,
    \begin{displaymath}
        \begin{aligned}
            \mathcal{O}_{\underline{\operatorname{Proj}}_X (\bigoplus^\infty_{n=0} \mathcal{J}^n)} 
            &\xrightarrow{unit} v_\ast \mathcal{O}_{\underline{\operatorname{Proj}}_X (\bigoplus^\infty_{n=0} \mathcal{P}^n)} 
            \\&\to \operatorname{coker}(\mathcal{O}_{\underline{\operatorname{Proj}}_X (\bigoplus^\infty_{n=0} \mathcal{J}^n)} \to v_\ast \mathcal{O}_{\underline{\operatorname{Proj}}_X (\bigoplus^\infty_{n=0} \mathcal{P}^n)})
            \to \mathcal{O}_{\underline{\operatorname{Proj}}_X (\bigoplus^\infty_{n=0} \mathcal{J}^n)}[1].
        \end{aligned}
    \end{displaymath}
    This distinguished triangle is isomorphic to the distinguished triangle 
    \begin{displaymath}
        \begin{aligned}
            \mathcal{O}_{\underline{\operatorname{Proj}}_X (\bigoplus^\infty_{n=0} \mathcal{J}^n)} 
            &\xrightarrow{unit} \mathbf{R} v_\ast \mathcal{O}_{\underline{\operatorname{Proj}}_X (\bigoplus^\infty_{n=0} \mathcal{P}^n)} 
            \\&\to \operatorname{cone}(\mathcal{O}_{\underline{\operatorname{Proj}}_X (\bigoplus^\infty_{n=0} \mathcal{J}^n)} \to \mathbf{R} v_\ast \mathcal{O}_{\underline{\operatorname{Proj}}_X (\bigoplus^\infty_{n=0} \mathcal{P}^n)})
            \to \mathcal{O}_{\underline{\operatorname{Proj}}_X (\bigoplus^\infty_{n=0} \mathcal{J}^n)}[1].
        \end{aligned}
    \end{displaymath}
    Indeed, the two canonically composed morphisms 
    \begin{displaymath}
        \mathbf{L}v^\ast \mathcal{O}_{\underline{\operatorname{Proj}}_X (\bigoplus^\infty_{n=0} \mathcal{J}^n)} 
        \to v^\ast \mathcal{O}_{\underline{\operatorname{Proj}}_X (\bigoplus^\infty_{n=0} \mathcal{J}^n)} 
        \to \mathcal{O}_{\underline{\operatorname{Proj}}_X (\bigoplus^\infty_{n=0} \mathcal{P}^n)} 
    \end{displaymath}
    and 
    \begin{displaymath}
        \mathcal{O}_{\underline{\operatorname{Proj}}_X (\bigoplus^\infty_{n=0} \mathcal{J}^n)} 
        \to  v_\ast \mathcal{O}_{\underline{\operatorname{Proj}}_X (\bigoplus^\infty_{n=0} \mathcal{P}^n)}
        \to \mathbf{R} v_\ast \mathcal{O}_{\underline{\operatorname{Proj}}_X (\bigoplus^\infty_{n=0} \mathcal{P}^n)}
    \end{displaymath}
    correspond under the adjunction isomorphism of derived pullback and pushforward. 
    See \cite[Eq.\ (3.2.1.3) in proof of Proposition 3.2.1 and Exercise 3.2.5(a)]{Lipman/Hashimoto:2009}. 
    This yields the first claim.

    We prove the second claim.
    By \cite[\S 1, Proposition 3.9.9]{Grothendieck/Dieudonne:1971}, birationality is preserved under flat base change. 
    Hence, $v^d$ is birational for $d\in \{a^3,ab^2,b^3,c r_1, \ldots, c r_e\}$.
    Finiteness is preserved under base change, so each $v^d$ is finite.
    Then a similar argument in the first claim above yields 
    \begin{displaymath}
        \begin{aligned}
            \operatorname{cone}
            & (\mathcal{O}_{\operatorname{Spec}(R[\frac{J}{d}])} \to \mathbf{R} v^d_\ast \mathcal{O}_{\operatorname{Spec}(R[\frac{P}{d}])})
            \\&\cong \operatorname{coker}(\mathcal{O}_{\operatorname{Spec}(R[\frac{J}{d}])} \to v^d_\ast \mathcal{O}_{\operatorname{Spec}(R[\frac{P}{d}])}).
        \end{aligned}
    \end{displaymath}
    Applying $\mathbf{L}(u^J_d)^\ast$ to the first claim gives us 
    \begin{displaymath}
        \begin{aligned}
            \mathbf{L}(u^J_d)^\ast \operatorname{coker}
            & (\mathcal{O}_{\underline{\operatorname{Proj}}_X (\bigoplus^\infty_{n=0} \mathcal{J}^n)} \to v_\ast \mathcal{O}_{\underline{\operatorname{Proj}}_X (\bigoplus^\infty_{n=0} \mathcal{P}^n)})
            \\&\cong \mathbf{L}(u^J_d)^\ast \operatorname{cone}(\mathcal{O}_{\underline{\operatorname{Proj}}_X (\bigoplus^\infty_{n=0} \mathcal{J}^n)} \to \mathbf{R}v_\ast \mathcal{O}_{\underline{\operatorname{Proj}}_X (\bigoplus^\infty_{n=0} \mathcal{P}^n)}).
        \end{aligned}
    \end{displaymath}
    Since $u^J_d$ is an open immersion, $\mathbf{L}(u^J_d)^\ast$ is defined by applying $(u^J_d)^\ast$ componentwise for any representative of a complex \cite[\href{https://stacks.math.columbia.edu/tag/02N4}{Tags 02N4} \& \href{https://stacks.math.columbia.edu/tag/015F}{015F}]{stacks-project}.
    Hence, we obtain that 
    \begin{displaymath}
        \begin{aligned}
            \mathbf{L}(u^J_d)^\ast \operatorname{coker}
            & (\mathcal{O}_{\underline{\operatorname{Proj}}_X (\bigoplus^\infty_{n=0} \mathcal{J}^n)} \to v_\ast \mathcal{O}_{\underline{\operatorname{Proj}}_X (\bigoplus^\infty_{n=0} \mathcal{P}^n)})
            \\&\cong (u^J_d)^\ast \operatorname{coker}(\mathcal{O}_{\underline{\operatorname{Proj}}_X (\bigoplus^\infty_{n=0} \mathcal{J}^n)} \to v_\ast \mathcal{O}_{\underline{\operatorname{Proj}}_X (\bigoplus^\infty_{n=0} \mathcal{P}^n)}).
        \end{aligned}
    \end{displaymath}

    To finish the proof, it suffices to check that 
    \begin{displaymath}
        \begin{aligned}
            (u^J_d)^\ast 
            & \operatorname{coker}(\mathcal{O}_{\underline{\operatorname{Proj}}_X (\bigoplus^\infty_{n=0} \mathcal{J}^n)} \to v_\ast \mathcal{O}_{\underline{\operatorname{Proj}}_X (\bigoplus^\infty_{n=0} \mathcal{P}^n)})
            \\&\cong \operatorname{coker}(\mathcal{O}_{\operatorname{Spec}(R[\frac{J}{d}])} \to v^d_\ast \mathcal{O}_{\operatorname{Spec}(R[\frac{P}{d}])}).
        \end{aligned}
    \end{displaymath}
    By flat base change in \eqref{eq:open_cover}, there exists a canonical isomorphism 
    \begin{displaymath}
        \beta^J_d \colon (u^J_d)^\ast v_\ast \mathcal{O}_{\underline{\operatorname{Proj}}_X (\bigoplus^\infty_{n=0} \mathcal{P}^n)} \to (v_d)_\ast \mathcal{O}_{\operatorname{Spec}(R[\frac{P}{d}])}.
    \end{displaymath}
    See \cite[Remark 22.94 \& Theorem 22.99]{Gortz/Wedhorn:2023}.
    For each open $W\subseteq \operatorname{Spec}(R[\frac{J}{d}])$, the source and target of $\beta^J_d$ coincide with $\mathcal{O}_{\underline{\operatorname{Proj}}_X (\bigoplus^\infty_{n=0} \mathcal{P}^n)}(v^{-1}(W))$ because 
    \begin{displaymath}
        v^{-1}_d (W) = v^{-1}(W) \subseteq \operatorname{Spec}(R[\frac{P}{d}]) \subseteq \underline{\operatorname{Proj}}_X (\bigoplus^\infty_{n=0} \mathcal{P}^n)
    \end{displaymath}
    and $(\beta^J_d)(W)$ is the identity. 
    Denote by $\eta_v$ and $\eta_{v_d}$ respectively the units for pullback and pushforward for $v$ and $v_d$. 
    Under the canonical isomorphism $(u^J_d)^\ast \mathcal{O}_{\underline{\operatorname{Proj}}_X (\bigoplus^\infty_{n=0} \mathcal{J}^n)} \cong \mathcal{O}_{\operatorname{Spec}(R[\frac{J}{d}])}$, the following square commutes
    \begin{displaymath}
        \begin{tikzcd}
            {(u^J_d)^\ast \mathcal{O}_{\underline{\operatorname{Proj}}_X (\bigoplus^\infty_{n=0} \mathcal{J}^n)}} & {(u^J_d)^\ast v_\ast \mathcal{O}_{\underline{\operatorname{Proj}}_X (\bigoplus^\infty_{n=0} \mathcal{P}^n)}} \\
            {\mathcal{O}_{\operatorname{Spec}(R[\frac{J}{d}])}} & {(v_d)_\ast \mathcal{O}_{\operatorname{Spec}(R[\frac{P}{d}])}.}
            \arrow["{(u^J_d)^\ast \eta_v}", from=1-1, to=1-2]
            \arrow["\cong"', from=1-1, to=2-1]
            \arrow["{\beta^J_d }", from=1-2, to=2-2]
            \arrow["{\eta_{v_d}}"', from=2-1, to=2-2]
        \end{tikzcd}
    \end{displaymath}
    Indeed, on an open $W\subseteq \operatorname{Spec}(R[\frac{J}{d}])$,
    both horizontal morphisms send a section in 
    \begin{displaymath}
        \mathcal{O}_{\underline{\operatorname{Proj}}_X (\bigoplus^\infty_{n=0} \mathcal{J}^n)} (W) = \mathcal{O}_{\operatorname{Spec}(R[\frac{J}{d}])} (W)
    \end{displaymath} 
    to the same section in 
    \begin{displaymath}
        \mathcal{O}_{\underline{\operatorname{Proj}}_X (\bigoplus^\infty_{n=0} \mathcal{P}^n)}(v^{-1}(W)) = \mathcal{O}_{\operatorname{Spec}(R[\frac{P}{d}])}(v^{-1}(W)).
    \end{displaymath}
    Thus, by the commutativity of the square, we complete the proof.
\end{proof}

\begin{remark}
    \label{rmk:represented_affine_blowup_algebras}
    For each $d\in \{a^3,ab^2,b^3,c r_1, \ldots, c r_e\}$, \cite[\href{https://stacks.math.columbia.edu/tag/07Z3}{Tag 07Z3}]{stacks-project} says $d$ is a nonzerodivisor in $R[\frac{J}{d}]$ and $R[\frac{J}{d}]_d \cong R_d$.
    Hence, the natural ring homomorphism $R[\frac{J}{d}] \to R[\frac{J}{d}]_d$ is injective.
    As $R$ is an integral domain, the natural ring homomorphism $R_d \to \operatorname{Frac}(R)$ is injective where $\frac{r}{d^n}\mapsto \frac{r}{d^n}$.
    Hence, $R[\frac{J}{d}]$ can be identified with a subring of $\operatorname{Frac}(R)$.
    Concretely, the elements of $R[\frac{J}{d}]$ can be written in the form $\frac{s}{d^n}$ with $s\in J^n$ \cite[\href{https://stacks.math.columbia.edu/tag/052P}{Tag 052P}]{stacks-project}. 
    Moreover, if $J=(g_1,\ldots,g_s)$, then $R[\frac{J}{d}]$ can be identified with $R [\frac{g_1}{d},\ldots, \frac{g_s}{d}] \subseteq \operatorname{Frac}(R)$. 
    See \cite[Remark 4.3]{Abad/Bravo/Villamayor:2020} or the discussion preceding \cite[Definition 5.6.2]{Swanson/Huneke:2006}.
    Similar reasoning gives analogous statements for $R[\frac{P}{d}]$.    
\end{remark}

\section{Vanishing}
\label{sec:local_vanishing}

\begin{lemma}
    \label{lem:vanishing_all_but_a3}
    Assume \Cref{setup:main}. 
    If $d\in \{ab^2,b^3,c r_1, \ldots, c r_e\}$, then 
    \begin{displaymath}
        \operatorname{cone}(\mathcal{O}_{\operatorname{Spec}(R[\frac{J}{d}])} \to \mathbf{R} v^d_\ast \mathcal{O}_{\operatorname{Spec}(R[\frac{P}{d}])}) \cong 0.
    \end{displaymath}
    In other words, the adjunction unit
    \begin{displaymath}
        \mathcal{O}_{\operatorname{Spec}(R[\frac{J}{d}])} \to \mathbf{R} v^d_\ast \mathcal{O}_{\operatorname{Spec}(R[\frac{P}{d}])}
    \end{displaymath}
    is an isomorphism.
\end{lemma}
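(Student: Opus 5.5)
By the second claim of \Cref{lem:concentrated_degree_zero_global}, the cone in question is isomorphic to $\operatorname{coker}(\mathcal{O}_{\operatorname{Spec}(R[\frac{J}{d}])} \to v^d_\ast \mathcal{O}_{\operatorname{Spec}(R[\frac{P}{d}])})$. Both schemes are affine and $v^d$ is affine, so this cokernel is the sheaf associated to the cokernel of the ring map $R[\frac{J}{d}] \to R[\frac{P}{d}]$. By \Cref{rmk:represented_affine_blowup_algebras}, this map is the inclusion of subrings of $\operatorname{Frac}(R)$. It therefore suffices to prove the equality $R[\frac{J}{d}] = R[\frac{P}{d}]$ inside $\operatorname{Frac}(R)$ for each $d\in\{ab^2,b^3,cr_1,\ldots,cr_e\}$.

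By the proof of \Cref{lem:reduction_ideals}, $P = J + (c)$. Using the description of \Cref{rmk:represented_affine_blowup_algebras} by generators, $R[\frac{P}{d}] = R[\frac{J}{d}][\frac{c}{d}]$. So the whole statement reduces to showing $\frac{c}{d}\in S:=R[\frac{J}{d}]$. The ring $S$ is generated over $R$ by the fractions $\frac{a^3}{d},\frac{ab^2}{d},\frac{b^3}{d},\frac{cr_j}{d}$. I will treat the three types of $d$ separately, each time writing $\frac{c}{d}$ as a product of these generators and elements of $R$.

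For $d=ab^2$, the generators include $\frac{a^3}{ab^2}=\frac{a^2}{b^2}$ and $\frac{b^3}{ab^2}=\frac{b}{a}$. Hence
\begin{displaymath}
\frac{c}{d}=\frac{a^2b}{ab^2}=\frac{a}{b}=\frac{a^2}{b^2}\cdot\frac{b}{a}\in S.
\end{displaymath}
For $d=b^3$, we have $\frac{ab^2}{b^3}=\frac{a}{b}\in S$, so $\frac{c}{d}=\frac{a^2}{b^2}=\bigl(\frac{a}{b}\bigr)^2\in S$.

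For $d=cr_i$, the target is $\frac{c}{d}=\frac{1}{r_i}$. The generators include $\frac{ab^2}{a^2br_i}=\frac{b}{ar_i}$ and $\frac{a^3}{a^2br_i}=\frac{a}{br_i}$. Their product is $\frac{1}{r_i^2}$, and multiplying by $r_i\in R$ gives $\frac{1}{r_i}\in S$.

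All three computations take place in the domain $\operatorname{Frac}(R)$, and the fractions make sense since $a,b,r_i\neq 0$. Thus $R[\frac{J}{d}]=R[\frac{P}{d}]$, the cokernel vanishes, and the unit is an isomorphism. I do not expect a real obstacle here. The one point to be careful about is the identification of $v^d$ with the inclusion $R[\frac{J}{d}]\subseteq R[\frac{P}{d}]$, which \Cref{rmk:open_cover} and \Cref{rmk:represented_affine_blowup_algebras} provide. The argument also explains why $d=a^3$ is excluded: in that chart $\frac{c}{a^3}=\frac{b}{a}$ need not lie in $R[\frac{J}{a^3}]$.
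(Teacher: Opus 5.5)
Your proposal is correct and follows the paper's proof. Both arguments use \Cref{lem:concentrated_degree_zero_global} and $R[\frac{P}{d}]=R[\frac{J}{d}][\frac{c}{d}]$ to reduce to showing $\frac{c}{d}\in R[\frac{J}{d}]$, then check this chart by chart. Your products of generator fractions are the paper's relations $cd^{N-1}\in J^N$ written out: for $d=cr_i$ you use the quadratic relation $c\cdot cr_i=r_i\cdot a^3\cdot ab^2\in J^2$, where the paper uses $c(cr_i)^2\in J^3$.
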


\begin{proof}
    By \Cref{lem:concentrated_degree_zero_global}, the complex is concentrated in degree zero.
    Since $v^d$ is finite, the complex can be represented by a coherent sheaf.
    Applying \cite[\href{https://stacks.math.columbia.edu/tag/06Z0}{Tag 06Z0}]{stacks-project}, we can work with modules on the corresponding rings. 
    Hence, we want to show that the unit $R[\frac{J}{d}] \to R[\frac{P}{d}]$ is an isomorphism. 
    As an $R[\frac{J}{d}]$-algebra, $R[\frac{P}{d}]\cong R[\frac{J}{d}] [\frac{c}{d}]$. 
    By \cite[\href{https://stacks.math.columbia.edu/tag/0CC1}{Tag 0CC1}]{stacks-project} and \cite[\S 1, Proposition 3.9.9]{Grothendieck/Dieudonne:1971}, the cokernel of the unit $R[\frac{J}{d}] \to R[\frac{P}{d}]$ is $R[\frac{P}{d}]/R[\frac{J}{d}]$.
    Thus, it suffices to show that $\frac{c}{d} \in R[\frac{J}{d}]$.

    Observe that if there exists $N\geq 1$ such that $cd^{N-1}\in J^N$, then $\frac{c}{d}\in R[\frac{J}{d}]$. 
    Indeed, by \Cref{rmk:represented_affine_blowup_algebras}, $cd^{N-1}\in J^N$ implies $\frac{cd^{N-1}}{d^N}\in R[\frac{J}{d}]$, and so $\frac{c}{d}\in R[\frac{J}{d}]$.
    Then a direct computation yields 
    \begin{itemize}
        \item $cb^3 = (ab^2)^2 \in J^2$
        \item $c(ab^2)=a^3 b^3 \in J^2$
        \item $c (c r_i)^2 = r_i^2 (a^3)^2 b^3 \in J^3$ for each $i\in \{1,\ldots,e\}$.
    \end{itemize}
    This completes the proof.


\end{proof}

\begin{lemma}
    \label{lem:identifyJa3}
    Assume \Cref{setup:main}. 
    Then $R[\frac{J}{a^3}]$ and $R[\frac{P}{a^3}]$ respectively coincide with $R$-algebras  $R[(\frac{b}{a})^2, (\frac{b}{a})^3,\frac{br_1}{a}, \ldots, \frac{br_e}{a}]$ and $R[\frac{b}{a}]$.
\end{lemma}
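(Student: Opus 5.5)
We work inside $\operatorname{Frac}(R)$ using \Cref{rmk:represented_affine_blowup_algebras}: if an ideal $I$ is generated by $g_1,\ldots,g_s$ and $d$ is one of these generators, then $R[\frac{I}{d}] = R[\frac{g_1}{d},\ldots,\frac{g_s}{d}]$. The proof is therefore a direct computation of these generating fractions, followed by a check that the two resulting lists generate the same $R$-algebras as the ones named in \Cref{lem:identifyJa3}.

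We start with $P$. Its generators are $a^3, a^2b, ab^2, b^3$, so dividing by $a^3$ gives
\begin{displaymath}
R\Bigl[\tfrac{P}{a^3}\Bigr] = R\Bigl[1, \tfrac{b}{a}, \bigl(\tfrac{b}{a}\bigr)^2, \bigl(\tfrac{b}{a}\bigr)^3\Bigr] = R\Bigl[\tfrac{b}{a}\Bigr].
\end{displaymath}
This case is immediate.

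Next we treat $J$. By \Cref{rmk:open_cover}, $J=(a^3,ab^2,b^3,cr_1,\ldots,cr_e)$ with $c=a^2b$. Dividing by $a^3$ gives the fractions
\begin{displaymath}
1,\quad \tfrac{ab^2}{a^3}=\bigl(\tfrac{b}{a}\bigr)^2,\quad \tfrac{b^3}{a^3}=\bigl(\tfrac{b}{a}\bigr)^3,\quad \tfrac{a^2br_i}{a^3}=\tfrac{br_i}{a}.
\end{displaymath}
Hence $R[\frac{J}{a^3}]=R[(\frac{b}{a})^2,(\frac{b}{a})^3,\frac{br_1}{a},\ldots,\frac{br_e}{a}]$, which is exactly the claimed algebra.

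The only point needing care is the identification in \Cref{rmk:represented_affine_blowup_algebras}. Elements of $R[\frac{J}{a^3}]$ have the form $s/a^{3n}$ with $s\in J^n$. We must confirm that every such element lies in the subalgebra generated by the $g_j/a^3$, and conversely. This holds because $J^n$ is generated by the degree-$n$ monomials in the $g_j$, and $s/a^{3n}$ is then an $R$-linear combination of products of $n$ of the fractions $g_j/a^3$. Since $a\neq 0$ in a domain, all of these fractions make sense in $\operatorname{Frac}(R)$.

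Everything here is routine. The one step worth writing out is that the generating set of $J$ from \Cref{rmk:open_cover} really includes the $cr_i$ and does not include $c$ itself; this is what keeps $\frac{b}{a}$ out of the list of generators of $R[\frac{J}{a^3}]$.
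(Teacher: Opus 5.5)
Your proposal is correct and follows the paper's proof. Both arguments compute the fractions $g/a^3$ in $\operatorname{Frac}(R)$ for the generators $g$ of $P=(a^3,c,ab^2,b^3)$ and of $J=(a^3,ab^2,b^3,cr_1,\ldots,cr_e)$, and then invoke \Cref{rmk:represented_affine_blowup_algebras}; your extra paragraph only spells out the generation statement that the paper cites from that remark.
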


\begin{proof}
    In $\operatorname{Frac}(R)$, one has $\frac{ab^2}{a^3} = (\frac{b}{a})^2$, $\frac{b^n}{a^n} = (\frac{b}{a})^n$, $\frac{cr_i}{a^3}=r_i \frac{b}{a}$, $\frac{c}{a^3} = \frac{b}{a}$.
    Then the claim follows.
    See \Cref{rmk:represented_affine_blowup_algebras}.

\end{proof}

\begin{lemma}
    \label{lem:ring_presentationJona3}
    Assume \Cref{setup:main}. 
    Let $T$ be an indeterminate. 
    There exists a surjective ring homomorphism $\phi\colon R[T] \to R[\frac{b}{a}]$ given by $T\mapsto \frac{b}{a}$.
    Moreover, the kernel of $\phi$ is the ideal $(aT - b)$, and hence,
    \begin{displaymath}
        R[T]/(aT - b) \cong R[\frac{b}{a}].
    \end{displaymath}
\end{lemma}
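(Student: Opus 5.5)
Surjectivity of $\phi$ is immediate: $R[\frac{b}{a}]\subseteq \operatorname{Frac}(R)$ is by definition generated over $R$ by $\frac{b}{a}$ (see \Cref{rmk:represented_affine_blowup_algebras} and \Cref{lem:identifyJa3}). Clearly $aT-b\in\ker\phi$. The plan is to prove the reverse inclusion $\ker\phi\subseteq(aT-b)$ by induction on the degree of a polynomial in the kernel, using only that $a,b$ is an $R$-regular sequence.

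Let $g=\sum_{i=0}^n g_iT^i\in\ker\phi$, so that $\sum_i g_i b^i a^{n-i}=0$ in $R$ after clearing denominators by $a^n$. This is legitimate because $R$ is a domain and $a\neq 0$. I will show that $a\mid g_n$. Reducing modulo $a$ gives $g_n b^n\in (a)$. Since $b$ is a nonzerodivisor on $R/(a)$, so is $b^n$, hence $g_n\in(a)$. Write $g_n=ah$. Then
\[ g-hT^{n-1}(aT-b)=\sum_{i<n-1}g_iT^i+(g_{n-1}+hb)T^{n-1}, \]
which lies in $\ker\phi$ and has degree at most $n-1$.

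Iterating this reduction brings $g$ down to a degree-zero element of the kernel. Such an element is a constant $g_0\in R$ with $\phi(g_0)=g_0=0$, since $R\to R[\frac{b}{a}]\subseteq\operatorname{Frac}(R)$ is injective. Hence $g\in(aT-b)$. The isomorphism $R[T]/(aT-b)\cong R[\frac{b}{a}]$ then follows from the first isomorphism theorem.

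The only genuine point is the divisibility step $a\mid g_n$, which is exactly where the regularity of $b$ on $R/(a)$ enters. Everything else is bookkeeping, provided one checks that clearing denominators is compatible with the identification of $R[\frac{b}{a}]$ as a subring of $\operatorname{Frac}(R)$.
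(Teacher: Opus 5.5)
Your proof is correct and matches the paper's argument: you clear denominators by $a^n$, use that $b^n$ is a nonzerodivisor on $R/(a)$ to get $a \mid g_n$, and subtract $hT^{n-1}(aT-b)$ to lower the degree. The degree-zero case is handled, as in the paper, by injectivity of $R \to \operatorname{Frac}(R)$.
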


\begin{proof}
    The mapping is a well-defined ring homomorphism and is surjective by construction.
    Since $(aT-b) \subseteq \ker (\phi)$, we need to check the reverse inclusion. 
    Fix $F(T)\in \ker (\phi)$. 
    If $F(T)=0$, then there is nothing to check.
    Assume $F(T)\not=0$.
    Set $n:= \operatorname{deg} (F(T))$.
    We prove the claim by induction on $n$. 
    If $n=0$, then $F(T)\in R$ is mapped to zero in $\operatorname{Frac}(R)$, which implies $F(T)=0$.
    Now assume that $n\geq 1$.
    Write $F(T)=\sum^n_{i=0} r_i T^i$ with $r_n\not=0$.
    As $\sum^n_{i=0} r_i (\frac{b}{a})^i = 0$, multiplying by $a^n$ yields $\sum^n_{i=0} a^{n-i} r_i b^i = 0 \in R$. 
    Reducing modulo the principal ideal $(a)$, we obtain that $r_n b^n \in (a)$.
    However, $a$ and $b$ form an $R$-regular sequence, and so $r_n \in (a)$. 
    Then $r_n = s_n a$ for some $s_n\in R$. 
    It follows that $F(T)- s_n T^{n-1}(a T - b) \in \ker (\phi)$ and has degree strictly less than $n$. 
    Thus, by the induction step, $F(T)- s_n T^{n-1}(a T - b)$ is in the ideal $(a T - b)$. 
    Since $s_n T^{n-1}(a T - b)$ is in the ideal $(a T - b)$, it follows that $F(T)\in (a T - b)$.
\end{proof}

\begin{lemma}
    \label{lem:unit_for_open_immersion_with_support_is_isomorphism}
    Let $X$ be a Noetherian scheme with affine diagonal, $j\colon U \to X$ be an open immersion from an affine scheme, and $E\in D^b_{\operatorname{coh}}(X)$ satisfy $Z:=\operatorname{Supp}(E)\subseteq U$ set-theoretically.
    Denote by $\eta$ the unit for the adjoint pair $\mathbf{L}j^\ast$ and $\mathbf{R}j_\ast$.
    Then $\eta_E \colon E \to \mathbf{R}j_\ast \mathbf{L}j^\ast E$ is an isomorphism.
\end{lemma}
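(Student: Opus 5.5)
To prove that $\eta_E$ is an isomorphism, it suffices to check this locally on $X$: the property of being an isomorphism in $D_{\operatorname{qc}}(X)$ is local, and $\mathbf{R}j_\ast$ commutes with restriction to opens by flat base change. Since $j$ is quasi-compact and quasi-separated ($U$ affine, $X$ has affine diagonal, so $j$ is even an affine morphism), $\mathbf{R}j_\ast$ preserves $D_{\operatorname{qc}}$. Moreover, for any open $V\subseteq X$ we have $(\mathbf{R}j_\ast \mathbf{L}j^\ast E)|_V \cong \mathbf{R}(j_V)_\ast (E|_{U\cap V})$, where $j_V\colon U\cap V\to V$ is the base change of $j$; the unit restricts compatibly. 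We therefore cover $X$ by the two opens $U$ and $X\setminus Z$, which do cover $X$ precisely because $Z\subseteq U$.

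On $U$: the base change of $j$ along $U\to X$ is the identity of $U$, since $j$ is a monomorphism and $U\times_X U = U$. Hence the restriction of $\eta_E$ to $U$ is the unit for the identity adjunction, which is an isomorphism.

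On $V:=X\setminus Z$: the restriction of $E$ to $V$ is zero, since all of its cohomology sheaves are supported in $Z$. The base change is $j_V\colon U\setminus Z\to V$, and
\begin{displaymath}
    (\mathbf{R}j_\ast \mathbf{L}j^\ast E)|_V \cong \mathbf{R}(j_V)_\ast\big((\mathbf{L}j^\ast E)|_{U\cap V}\big) = \mathbf{R}(j_V)_\ast(E|_{U\setminus Z}) = 0,
\end{displaymath}
again because $E|_{U\setminus Z}=0$. Thus both sides vanish on $V$, and $\eta_E|_V$ is trivially an isomorphism.

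The only step requiring care is the compatibility of $\mathbf{R}j_\ast$, and of the unit, with restriction to opens. This is flat base change for the quasi-compact, quasi-separated morphism $j$ (Stacks, Tag 02KH / 08IB), and it is where the affine-diagonal hypothesis enters. Alternatively, one could cite Tag 0G6I (or similar) on $\mathbf{R}j_\ast$ of complexes supported in a closed subset contained in $U$. No coherence of $E$ is really needed beyond $E\in D_{\operatorname{qc}}(X)$.
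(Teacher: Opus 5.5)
Your proposal is correct and is essentially the paper's argument: you cover $X$ by $U$ and $V=X\setminus Z$, note that $\eta_E$ restricts to an isomorphism on $U$ (the paper phrases this as $\mathbf{L}j^\ast\eta_E$ being an isomorphism), and use flat base change to see that both $E$ and $\mathbf{R}j_\ast\mathbf{L}j^\ast E$ vanish on $V$. You leave implicit that $X\setminus Z$ is open, and this is exactly where coherence is used: the finitely many nonzero cohomology sheaves are coherent, so their supports are closed, as the paper states explicitly. So your closing remark that $E\in D_{\operatorname{qc}}(X)$ suffices does not follow from your argument as written.
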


\begin{proof}
    Since $E$ is a bounded pseudocoherent complex, $\operatorname{Supp}(E)$ is closed.
    Set $V= X\setminus Z$.
    Denote by $s\colon V \to X$ the associated open immersion.
    Note that $U$ and $V$ form an open cover of $X$.
    Then $\mathbf{L}j^\ast \eta_E$ is an isomorphism (see e.g.\ \cite[Lemma A.4]{GuisadoVillaalgordo/Lank:2026} and \cite[\href{https://stacks.math.columbia.edu/tag/071Q}{Tag 071Q}]{stacks-project}, or \cite[Theorem 1]{Jorgensen:2009}).
    Hence, $\mathbf{L}j^\ast \operatorname{cone}(\eta_E)\cong 0$.
    As $\operatorname{Supp}(E)=Z$, it follows that $\mathbf{L}s^\ast E\cong 0$. 
    There exists a fibered square,
    \begin{displaymath}
        \begin{tikzcd}
            {V \cap U} & U \\
            V & {X.}
            \arrow["{s^\prime}", from=1-1, to=1-2]
            \arrow["{j^\prime}"', from=1-1, to=2-1]
            \arrow["j", from=1-2, to=2-2]
            \arrow["s"', from=2-1, to=2-2]
        \end{tikzcd}
    \end{displaymath}
    By flat base change, 
    \begin{displaymath}
        \mathbf{L}s^\ast \mathbf{R}j_\ast \mathbf{L}j^\ast E 
        \cong \mathbf{R}j^\prime_\ast \mathbf{L}(s^\prime)^\ast \mathbf{L}j^\ast E
        \cong \mathbf{R}j^\prime_\ast \mathbf{L}(j^\prime)^\ast \mathbf{L}s^\ast E \cong 0.
    \end{displaymath}
    This implies that $\mathbf{L}s^\ast \operatorname{cone}(\eta_E)\cong 0$.
    Consequently, $\operatorname{cone}(\eta_E)\cong 0$.
\end{proof}

\begin{lemma}
    \label{lem:unit_on_U_isomorphism}
    Assume \Cref{setup:main}. 
    There exists an isomorphism
    \begin{displaymath}
        \begin{aligned}
            \operatorname{cone}
            & (\mathcal{O}_{\underline{\operatorname{Proj}}_X (\bigoplus^\infty_{n=0} \mathcal{J}^n)} \to \mathbf{R}v_\ast \mathcal{O}_{\underline{\operatorname{Proj}}_X (\bigoplus^\infty_{n=0} \mathcal{P}^n)})
            \\&\xrightarrow{unit} \mathbf{R} (u^J_{a^3})_\ast  \mathbf{L} (u^J_{a^3})^\ast  \operatorname{cone}(\mathcal{O}_{\underline{\operatorname{Proj}}_X (\bigoplus^\infty_{n=0} \mathcal{J}^n)} \to \mathbf{R}v_\ast \mathcal{O}_{\underline{\operatorname{Proj}}_X (\bigoplus^\infty_{n=0} \mathcal{P}^n)}).
        \end{aligned}
    \end{displaymath}
\end{lemma}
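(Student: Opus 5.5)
The plan is to apply \Cref{lem:unit_for_open_immersion_with_support_is_isomorphism} with the Noetherian scheme $\underline{\operatorname{Proj}}_X (\bigoplus^\infty_{n=0} \mathcal{J}^n)$, the open immersion $j=u^J_{a^3}$ from the affine scheme $\operatorname{Spec}(R[\frac{J}{a^3}])$, and
\begin{displaymath}
E:=\operatorname{cone}(\mathcal{O}_{\underline{\operatorname{Proj}}_X (\bigoplus^\infty_{n=0} \mathcal{J}^n)} \to \mathbf{R}v_\ast \mathcal{O}_{\underline{\operatorname{Proj}}_X (\bigoplus^\infty_{n=0} \mathcal{P}^n)}).
\end{displaymath}
This requires three hypotheses: the blowup has affine diagonal, $E\in D^b_{\operatorname{coh}}$, and $\operatorname{Supp}(E)\subseteq \operatorname{Spec}(R[\frac{J}{a^3}])$. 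The conclusion of that lemma is then exactly the desired statement, since the unit there is the unit of $\mathbf{L}j^\ast \dashv \mathbf{R}j_\ast$.

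For the first hypothesis, the blowup is projective over the affine scheme $X$, hence separated, so its diagonal is a closed immersion and in particular affine. For the second, the first claim of \Cref{lem:concentrated_degree_zero_global} identifies $E$ with the cokernel of $\mathcal{O} \to v_\ast\mathcal{O}$. Since $v$ is finite (\Cref{lem:finite_birational}), $v_\ast \mathcal{O}$ is coherent, so $E$ is a coherent sheaf placed in degree zero and lies in $D^b_{\operatorname{coh}}$.

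The main step is the support condition. By \Cref{rmk:open_cover}, the opens $\operatorname{Spec}(R[\frac{J}{d}])$ with $d\in\{a^3,ab^2,b^3,cr_1,\ldots,cr_e\}$ cover the blowup. For $d\neq a^3$, \Cref{lem:concentrated_degree_zero_global} gives $\mathbf{L}(u^J_d)^\ast E \cong \operatorname{cone}(\mathcal{O}_{\operatorname{Spec}(R[\frac{J}{d}])}\to \mathbf{R}v^d_\ast\mathcal{O}_{\operatorname{Spec}(R[\frac{P}{d}])})$, and \Cref{lem:vanishing_all_but_a3} shows this cone vanishes. So $E$ restricts to zero on every chart other than the $a^3$-chart. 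Because support is local and these charts together with $\operatorname{Spec}(R[\frac{J}{a^3}])$ cover the whole space, $\operatorname{Supp}(E)$ misses the union of the other charts and is therefore contained in $\operatorname{Spec}(R[\frac{J}{a^3}])$.

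I expect no real obstacle. The only point needing care is that $\operatorname{Supp}(E)$ is defined via cohomology sheaves, which commute with restriction to opens, so vanishing on each chart does give empty support there. The lemma then follows.
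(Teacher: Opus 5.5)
Your proposal is correct and follows the paper's proof: identify the cone with the coherent cokernel via \Cref{lem:concentrated_degree_zero_global}, use the chart cover of \Cref{rmk:open_cover} together with \Cref{lem:vanishing_all_but_a3} to confine the support to $\operatorname{Spec}(R[\frac{J}{a^3}])$, and then invoke \Cref{lem:unit_for_open_immersion_with_support_is_isomorphism}. You are slightly more explicit than the paper, which leaves implicit the checks that the blowup has affine diagonal (it is separated) and that the cone lies in $D^b_{\operatorname{coh}}$ (since $v$ is finite).
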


\begin{proof}
    By \Cref{lem:concentrated_degree_zero_global},
    \begin{displaymath}
        \begin{aligned}
            \operatorname{coker}
            & (\mathcal{O}_{\underline{\operatorname{Proj}}_X (\bigoplus^\infty_{n=0} \mathcal{J}^n)} \to v_\ast \mathcal{O}_{\underline{\operatorname{Proj}}_X (\bigoplus^\infty_{n=0} \mathcal{P}^n)})
            \\&\cong \operatorname{cone}(\mathcal{O}_{\underline{\operatorname{Proj}}_X (\bigoplus^\infty_{n=0} \mathcal{J}^n)} \to \mathbf{R}v_\ast \mathcal{O}_{\underline{\operatorname{Proj}}_X (\bigoplus^\infty_{n=0} \mathcal{P}^n)}).
        \end{aligned}
    \end{displaymath}
    Moreover, \cite[\href{https://stacks.math.columbia.edu/tag/056J}{Tag 056J}]{stacks-project} says that the support of the cokernel is closed in $\underline{\operatorname{Proj}}_X (\oplus^\infty_{n=0} \mathcal{J}^n)$. 
    From \Cref{rmk:open_cover}, $\underline{\operatorname{Proj}}_X (\bigoplus^\infty_{n=0} \mathcal{J}^n)$ admits an open cover given by $\operatorname{Spec}(R[\frac{J}{d}])$ where $d\in \{a^3,ab^2,b^3,c r_1, \ldots, c r_e\}$.
    Applying \Cref{lem:vanishing_all_but_a3}, the cokernel vanishes on all $\operatorname{Spec}(R[\frac{J}{d}])$ where $d\in \{ab^2,b^3,c r_1, \ldots, c r_e\}$.
    Hence, the support of the cokernel is contained in $\operatorname{Spec}(R[\frac{J}{a^3}])$.
    Consequently, from \Cref{lem:unit_for_open_immersion_with_support_is_isomorphism},
    \begin{displaymath}
        \begin{aligned}
            \operatorname{cone}
            & (\mathcal{O}_{\underline{\operatorname{Proj}}_X (\bigoplus^\infty_{n=0} \mathcal{J}^n)} \to \mathbf{R}v_\ast \mathcal{O}_{\underline{\operatorname{Proj}}_X (\bigoplus^\infty_{n=0} \mathcal{P}^n)})
            \\&\cong \mathbf{R} (u^J_{a^3})_\ast  \mathbf{L} (u^J_{a^3})^\ast  \operatorname{cone}(\mathcal{O}_{\underline{\operatorname{Proj}}_X (\bigoplus^\infty_{n=0} \mathcal{J}^n)} \to \mathbf{R}v_\ast \mathcal{O}_{\underline{\operatorname{Proj}}_X (\bigoplus^\infty_{n=0} \mathcal{P}^n)}).
        \end{aligned}
    \end{displaymath}
    This completes the proof.
\end{proof}

\begin{remark}
    \label{rmk:support_on_chart}
    The proof of \Cref{lem:unit_on_U_isomorphism} shows that 
    \begin{equation}
        \label{eq:support_on_chart}
        S:= \operatorname{Supp}\big(\operatorname{cone} (\mathcal{O}_{\underline{\operatorname{Proj}}_X (\bigoplus^\infty_{n=0} \mathcal{J}^n)} \to \mathbf{R}v_\ast \mathcal{O}_{\underline{\operatorname{Proj}}_X (\bigoplus^\infty_{n=0} \mathcal{P}^n)})\big) \subseteq \operatorname{Spec}(R[\frac{J}{a^3}]).
    \end{equation}
\end{remark}

\begin{lemma}
    \label{lem:quotient_nonzero}
    Assume \Cref{setup:main}. 
    There exists a closed point $q\in \underline{\operatorname{Proj}}_X (\oplus^\infty_{n=0} \mathcal{J}^n)$ with associated closed immersion
    \begin{displaymath}
        h \colon \operatorname{Spec}(\kappa(q)) \to \underline{\operatorname{Proj}}_X (\bigoplus^\infty_{n=0} \mathcal{J}^n)
    \end{displaymath}
    such that 
    \begin{displaymath}
        \operatorname{cone}(\mathcal{O}_{\underline{\operatorname{Proj}}_X (\bigoplus^\infty_{n=0} \mathcal{J}^n)} \to \mathbf{R}v_\ast \mathcal{O}_{\underline{\operatorname{Proj}}_X (\bigoplus^\infty_{n=0} \mathcal{P}^n)})
        \cong \mathbf{R}h_\ast \mathcal{O}_{\operatorname{Spec}(\kappa(q))}.
    \end{displaymath}
\end{lemma}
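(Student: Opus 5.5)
By \Cref{lem:unit_on_U_isomorphism} and \Cref{lem:concentrated_degree_zero_global}, the cone is isomorphic to $\mathbf{R}(u^J_{a^3})_\ast$ applied to the coherent sheaf on $\operatorname{Spec}(R[\frac{J}{a^3}])$ associated to the module $Q:=R[\frac{P}{a^3}]/R[\frac{J}{a^3}]$. The plan is therefore to compute $Q$ explicitly and show that $Q\cong \kappa(q)$ for a maximal ideal $q$ of $R[\frac{J}{a^3}]$ lying over $\mathfrak{m}$. Then $q$ is a closed point of the blowup, since it is closed in the fibre over the closed point and the blowup is proper over the local scheme $X$. The composite of $\operatorname{Spec}(\kappa(q))\to\operatorname{Spec}(R[\frac{J}{a^3}])$ with $u^J_{a^3}$ is the closed immersion $h$, and $\mathbf{R}(u^J_{a^3})_\ast$ of the skyscraper is $\mathbf{R}h_\ast\mathcal{O}_{\operatorname{Spec}(\kappa(q))}$.

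To compute $Q$, write $t=\frac{b}{a}$. By \Cref{lem:identifyJa3} we have $A:=R[\frac{J}{a^3}]=R[t^2,t^3,r_1t,\ldots,r_et]$ and $B:=R[\frac{P}{a^3}]=R[t]$, and by \Cref{lem:ring_presentationJona3} $B\cong R[T]/(aT-b)$. The first step is to show $B=A+A\,t$. Every $t^n$ with $n\ge 2$ lies in $A$, since $n$ is a sum of $2$'s and $3$'s, so $B=A+Rt$. Hence the map $R\to Q$, $r\mapsto \overline{rt}$, is surjective.

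Next, compute its kernel, which contains $\mathfrak{m}$ because $r_it\in A$. Then $Q$ is a cyclic $R$-module generated by $\bar t$ and killed by $\mathfrak{m}$, so $Q\cong k$ or $Q=0$. Its $A$-module structure is as follows: $t^2,t^3$ and $r_it$ act on $\bar t$ with image in $A$ (for instance $t^2\cdot t=t^3\in A$ and $r_it\cdot t=r_it^2\in A$). Hence $Q\cong A/q$, where $q$ is the maximal ideal generated by $\mathfrak{m}$, $t^2$, $t^3$ and the $r_it$, with $A/q\cong k$.

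The main obstacle is showing $Q\neq 0$, that is, $t\notin A$. I would use the grading of $B=R[T]/(aT-b)$. Suppose $t=\sum_n s_nt^n$ with $s_0\in R$, $s_1\in\mathfrak{m}$ and $s_n\in R$ for $n\ge2$; this is the general shape of an element of $A$. Lift this to the relation $T-\sum_n s_nT^n\in(aT-b)$ in $R[T]$. Comparing coefficients, write $T-\sum_n s_nT^n=(aT-b)G(T)$ with $G=\sum_n g_nT^n$. The constant term gives $-s_0=-bg_0$. The linear term gives $1-s_1=ag_0-bg_1\in\mathfrak{m}$. Since $1-s_1$ is a unit, this is a contradiction. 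This reduces nonvanishing to $a,b\in\mathfrak{m}$, with the regular-sequence hypothesis entering only through \Cref{lem:ring_presentationJona3}.

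A cleaner route is to reduce modulo $\mathfrak{m}$: $B/\mathfrak{m}B\cong k[T]$ because $a,b\in\mathfrak{m}$, and the image of $A$ there is $k[T^2,T^3]$, which does not contain $T$.
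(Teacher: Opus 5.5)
Your proposal is correct and is essentially the paper's argument: you pass to the chart $\operatorname{Spec}(R[\frac{J}{a^3}])$, show the cokernel $R[t]/R[t^2,t^3,r_1t,\ldots,r_et]$ is generated by $\bar t$ and killed by $\mathfrak{q}=(\mathfrak{m},t^2,t^3,r_1t,\ldots,r_et)$, and obtain $t\notin R[t^2,t^3,r_1t,\ldots,r_et]$ by passing to $R[t]/\mathfrak{m}R[t]\cong k[T]$, where the image is $k[T^2,T^3]$ (your coefficient comparison is the same argument written out). The differences are minor: the paper gets $\mathfrak{q}\neq R[t^2,t^3,r_1t,\ldots,r_et]$ from the explicit surjection $\theta$ onto $k$ (you leave this implicit, though it follows from $Q\neq 0$, since $Q$ is a quotient of $A/\mathfrak{q}$), and it proves $q$ is closed because the support of the coherent cokernel on the whole blowup is closed and equals $\{q\}$, whereas your fibre argument also works, since $q$ has residue field $k$ in the finite type $k$-scheme $f_{\mathcal{J}}^{-1}(\mathfrak{m})$.
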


\begin{proof}
    By \Cref{lem:unit_on_U_isomorphism,lem:concentrated_degree_zero_global},
    \begin{equation}
        \label{eq:coker_unit}
        \begin{aligned}
            \operatorname{cone}
            & (\mathcal{O}_{\underline{\operatorname{Proj}}_X (\bigoplus^\infty_{n=0} \mathcal{J}^n)} \to \mathbf{R}v_\ast \mathcal{O}_{\underline{\operatorname{Proj}}_X (\bigoplus^\infty_{n=0} \mathcal{P}^n)})
            \\&\cong \mathbf{R} (u^J_{a^3})_\ast  \mathbf{L} (u^J_{a^3})^\ast  \operatorname{cone}(\mathcal{O}_{\underline{\operatorname{Proj}}_X (\bigoplus^\infty_{n=0} \mathcal{J}^n)} \to \mathbf{R}v_\ast \mathcal{O}_{\underline{\operatorname{Proj}}_X (\bigoplus^\infty_{n=0} \mathcal{P}^n)})
            \\&\cong \mathbf{R} (u^J_{a^3})_\ast \operatorname{coker} (\mathcal{O}_{\operatorname{Spec}(R[\frac{J}{a^3}])} \to v^{a^3}_\ast \mathcal{O}_{\operatorname{Spec}(R[\frac{P}{a^3}])}).
        \end{aligned}
    \end{equation}
    We will prove that there exists a closed immersion $h_q\colon \operatorname{Spec}(\kappa(q)) \to \operatorname{Spec}(R[\frac{J}{a^3}])$ such that $u^J_{a^3} \circ h_q$ is a closed immersion and that 
    \begin{displaymath}
        \operatorname{coker} (\mathcal{O}_{\operatorname{Spec}(R[\frac{J}{a^3}])} \to v^{a^3}_\ast \mathcal{O}_{\operatorname{Spec}(R[\frac{P}{a^3}])}) \cong \mathbf{R}(h_q)_\ast \mathcal{O}_{\operatorname{Spec}(\kappa(q))}.
    \end{displaymath}
    Applying \cite[\href{https://stacks.math.columbia.edu/tag/06Z0}{Tag 06Z0}]{stacks-project}, we can work with modules over the corresponding rings. 

    Let $T$ be an indeterminate. 
    Consider the ring homomorphism $\epsilon^\prime \colon R[T] \to R/\mathfrak{m}$ given by the composition of ring homomorphisms,
    \begin{displaymath}
        R[T] \xrightarrow{T\mapsto 0} R \to R/\mathfrak{m}.
    \end{displaymath}
    Here $R \to R/\mathfrak{m}$ is the quotient ring homomorphism.
    Since $(aT-b)\subseteq \ker (\epsilon^\prime)$, there exists a unique ring homomorphism $\epsilon \colon R[T]/(aT-b) \to R/\mathfrak{m}$ such that $\epsilon^\prime$ is the composition of the quotient ring homomorphism $R[T] \to R[T]/(aT-b)$ and $\epsilon$.
    As $R \subseteq R[T]$ is a subring and $R\subseteq R[T] \xrightarrow{T \mapsto 0} R$ is the identity, it follows that $\epsilon$ is surjective.

    Set $t:= \frac{b}{a}$. 
    There exists a composition of ring homomorphisms
    \begin{equation}
        \label{eq:factor_affine_blowup_algebras}
        \theta \colon R[t^2, t^3, r_1 t, \ldots, r_e t] 
        \subseteq R[t] 
        \cong R[T]/(aT-b)
        \xrightarrow{\epsilon} R / \mathfrak{m}
    \end{equation}
    where the isomorphism is from \Cref{lem:ring_presentationJona3}.
    Since $R\subseteq R[t^2, t^3,r_1 t, \ldots, r_e t]$, $\theta$ is surjective.
    The kernel of $\theta$ is the ideal $\mathfrak{q} = (\mathfrak{m}$, $\mathfrak{m} t, t^2, t^3)$ in $R[t^2, t^3,r_1 t, \ldots, r_e t]$.
    Note that $\mathfrak{q}\subseteq \ker (\theta)$.
    Every element of $R[t^2, t^3,r_1 t, \ldots, r_e t]$ can be expressed as a polynomial in $t^2, t^3,r_1 t, \ldots, r_e t$ with coefficients in $R$.
    Hence, as an additive group, 
    \begin{displaymath}
        R[t^2, t^3,r_1 t, \ldots, r_e t] = R + (t^2, t^3,r_1 t, \ldots, r_e t).
    \end{displaymath}
    Choose $F\in \ker (\theta)$.
    Write $F=F_1 + F_2$ with $F_1\in R$ and $F_2\in (t^2, t^3,r_1 t, \ldots, r_e t)$. 
    If $\theta (F)= 0 + \mathfrak{m}$, then $F_1\in \mathfrak{m}$.
    Hence, $F\in (\mathfrak{m}) + (t^2, t^3,r_1 t, \ldots, r_e t) = \mathfrak{q}$, and so $\mathfrak{q} = \ker (\theta)$. 

    Since $t^n \in R[t^2, t^3,r_1 t, \ldots, r_e t]$ for $n\geq 2$, we have that $R[t]$ is generated by $1$ and $t$ as an $R[t^2, t^3,r_1 t, \ldots, r_e t]$-module.
    Hence, the coset $\overline{t} := t+ R[t^2, t^3,r_1 t, \ldots, r_e t]$ generates the quotient module
    \begin{displaymath}
        R[t]/ R[t^2, t^3,r_1 t, \ldots, r_e t].
    \end{displaymath}
    Moreover, 
    \begin{displaymath}
        \mathfrak{m}t, 
        \quad (\mathfrak{m}t)t = \mathfrak{m}t^2,
        \quad t^2 t = t^3,
        \quad t^3 t = t^4
    \end{displaymath}
    belong to $R[t^2, t^3,r_1 t, \ldots, r_e t]$.
    Therefore, $\mathfrak{q}$ annihilates the quotient module.

    It follows that the surjection of $R[t^2, t^3,r_1 t, \ldots, r_e t]$-modules
    \begin{displaymath}
        R[t^2, t^3,r_1 t, \ldots, r_e t] \to R[t]/R[t^2, t^3,r_1 t, \ldots, r_e t], 
        \quad 1 \mapsto \overline{t}
    \end{displaymath}
    factors through a surjection 
    \begin{equation}
        \label{eq:residue_field_to_quotient}
        R[t^2, t^3,r_1 t, \ldots, r_e t]/\mathfrak{q} \cong k \to R[t]/R[t^2, t^3,r_1 t, \ldots, r_e t].
    \end{equation}
    We show that this quotient is nonzero.
    Using \Cref{lem:ring_presentationJona3}, we have a composition of ring homomorphisms
    \begin{displaymath}
        R[t^2, t^3, r_1 t, \ldots, r_e t] \to R[t] \cong R[T]/(aT-b).
    \end{displaymath}
    Consider the ideal $\mathfrak{m}R[T]$ and the natural ring isomorphism 
    \begin{displaymath}
        R[T]/\mathfrak{m}R[T] \to (R/\mathfrak{m})[T].
    \end{displaymath}
    Since $(aT-b)\in \mathfrak{m}R[T]$, there exists a quotient ring homomorphism 
    \begin{displaymath}
        R[T]/(aT-b) \to \frac{R[T]/(aT-b)}{\mathfrak{m}R[T]/(aT-b)}.
    \end{displaymath}
    However, this quotient ring is isomorphic to $R[T]/\mathfrak{m}R[T]$. 
    Then $t^2, t^3,r_1 t, \ldots, r_e t$ are mapped to cosets which are respectively represented by $T^2,T^3,r_1T,\ldots,r_eT$. 
    Now, after composing with $R[T]/\mathfrak{m}R[T] \to (R/\mathfrak{m})[T]$, their images correspond respectively to $T^2, T^3, 0,\ldots, 0$, so the image is $(R/\mathfrak{m})[T^2,T^3]$.
    Since $T\not\in (R/\mathfrak{m})[T^2,T^3]$, it follows that $t\not\in R[t^2, t^3, r_1 t, \ldots, r_e t]$.
    Hence, $R[t]/R[t^2, t^3,r_1 t, \ldots, r_e t]$ is nonzero.

    As $\mathfrak{q}$ is maximal, $k$ is a simple $R[t^2, t^3,r_1 t, \ldots, r_e t]$-module.
    Hence, the image of \eqref{eq:residue_field_to_quotient} being nonzero implies its kernel is zero.
    Thus, \eqref{eq:residue_field_to_quotient} is an isomorphism.
    Moreover, \eqref{eq:factor_affine_blowup_algebras} yields an isomorphism of $R[t^2, t^3,r_1 t, \ldots, r_e t]$-algebras
    \begin{equation}
        \label{eq:residue_field_isomorphism}
        R[t^2, t^3,r_1 t, \ldots, r_e t] / \mathfrak{q} \to k.
    \end{equation}

    Let $q$ be the closed point of $\operatorname{Spec}(R[\frac{J}{a^3}])$ corresponding to the maximal ideal $\mathfrak{q}$.
    Denote by $h_q\colon \operatorname{Spec}(\kappa(q)) \to \operatorname{Spec}(R[\frac{J}{a^3}])$ the associated closed immersion.
    The composition $u^J_{a^3} \circ h_q$ is an immersion.
    We claim that $u^J_{a^3} \circ h_q$ is a closed immersion.
    Applying \cite[\href{https://stacks.math.columbia.edu/tag/01IQ}{Tag 01IQ}]{stacks-project}, it suffices to check that the topological image of $u^J_{a^3} \circ h_q$ is closed.
    By \Cref{rmk:support_on_chart}, $S\subseteq \operatorname{Spec}(R[\frac{J}{a^3}])$ (see \eqref{eq:support_on_chart}).
    By \cite[\href{https://stacks.math.columbia.edu/tag/056J}{Tag 056J}]{stacks-project}, $S$ coincides with the collection of $p\in \underline{\operatorname{Proj}}_X (\bigoplus^\infty_{n=0} \mathcal{J}^n)$ such that 
    \begin{displaymath}
        \big( \operatorname{coker}(\mathcal{O}_{\underline{\operatorname{Proj}}_X (\bigoplus^\infty_{n=0} \mathcal{J}^n)} \to v_\ast \mathcal{O}_{\underline{\operatorname{Proj}}_X (\bigoplus^\infty_{n=0} \mathcal{P}^n)}) \big)_p \otimes_{\mathcal{O}_{\underline{\operatorname{Proj}}_X (\bigoplus^\infty_{n=0} \mathcal{J}^n),p}} \kappa(p) \not\cong 0.
    \end{displaymath}
    Hence, for any $p\in S$, \eqref{eq:support_on_chart} implies the natural morphism $\operatorname{Spec}(\kappa(p)) \to \underline{\operatorname{Proj}}_X (\bigoplus^\infty_{n=0} \mathcal{J}^n)$ factors through $u^J_{a^3}$. 
    Then, from \eqref{eq:concentrated_degree_zero_global}, it follows that
    \begin{displaymath}
        S= u^J_{a^3} \big( \operatorname{Supp}\big( \operatorname{coker} (\mathcal{O}_{\operatorname{Spec}(R[\frac{J}{a^3}])} \to v^{a^3}_\ast \mathcal{O}_{\operatorname{Spec}(R[\frac{P}{a^3}])}) \big) \big).
    \end{displaymath} 
    However, \cite[\href{https://stacks.math.columbia.edu/tag/06Z0}{Tag 06Z0}]{stacks-project}, \eqref{eq:residue_field_isomorphism}, and \eqref{eq:residue_field_to_quotient} imply 
    \begin{equation}
        \label{eq:decompose_direct_sum_of_fields}
        \operatorname{coker} (\mathcal{O}_{\operatorname{Spec}(R[\frac{J}{a^3}])} \to v^{a^3}_\ast \mathcal{O}_{\operatorname{Spec}(R[\frac{P}{a^3}])}) \cong \mathbf{R}(h_q)_\ast \mathcal{O}_{\operatorname{Spec}(\kappa(q))}.
    \end{equation}
    By \cite[Remark 23.46(2)]{Gortz/Wedhorn:2023}, the support of the complex above is $\{q\}$.
    Thus, $q$ is a closed point in $\underline{\operatorname{Proj}}_X (\bigoplus^\infty_{n=0} \mathcal{J}^n)$.

    Now, by combining \eqref{eq:coker_unit} and \eqref{eq:decompose_direct_sum_of_fields},
    \begin{equation}
        \label{eq:quotient_nonzero_final}
        \begin{aligned}
            \operatorname{cone}
            & (\mathcal{O}_{\underline{\operatorname{Proj}}_X (\bigoplus^\infty_{n=0} \mathcal{J}^n)} \to \mathbf{R}v_\ast \mathcal{O}_{\underline{\operatorname{Proj}}_X (\bigoplus^\infty_{n=0} \mathcal{P}^n)}) 
            \\& \cong \mathbf{R}(u^J_{a^3}\circ h_q)_\ast \mathcal{O}_{\operatorname{Spec}(\kappa(q))}.
        \end{aligned}
    \end{equation}
    This completes the proof.
\end{proof}

\section{Regularity}
\label{sec:regularity}

\begin{lemma}
    \label{lem:blowup_extensions_for_residue_field}
    Assume \Cref{setup:main}. 
    There exists a distinguished triangle  
    \begin{displaymath}
        \begin{aligned}
            \mathbf{R}(f_\mathcal{J})_\ast \mathcal{O}_{\underline{\operatorname{Proj}}_X (\bigoplus^\infty_{n=0} \mathcal{J}^n)} 
            &\to \mathbf{R}(f_\mathcal{P})_\ast \mathcal{O}_{\underline{\operatorname{Proj}}_X (\bigoplus^\infty_{n=0} \mathcal{P}^n)}
            \\& \to \mathbf{R} i_\ast \mathcal{O}_{\operatorname{Spec}(k)} 
            \to \mathbf{R}(f_\mathcal{J})_\ast \mathcal{O}_{\underline{\operatorname{Proj}}_X (\bigoplus^\infty_{n=0} \mathcal{J}^n)} [1].
        \end{aligned}
    \end{displaymath}
\end{lemma}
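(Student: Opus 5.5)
The plan is to push forward along $f_\mathcal{J}$ the distinguished triangle coming from \Cref{lem:concentrated_degree_zero_global} and \Cref{lem:quotient_nonzero}. Write $Y_J := \underline{\operatorname{Proj}}_X (\bigoplus^\infty_{n=0} \mathcal{J}^n)$ and $Y_P := \underline{\operatorname{Proj}}_X (\bigoplus^\infty_{n=0} \mathcal{P}^n)$. On $Y_J$ we have the distinguished triangle
\begin{displaymath}
    \mathcal{O}_{Y_J} \xrightarrow{unit} \mathbf{R}v_\ast \mathcal{O}_{Y_P} \to \operatorname{cone}(\mathcal{O}_{Y_J} \to \mathbf{R}v_\ast \mathcal{O}_{Y_P}) \to \mathcal{O}_{Y_J}[1].
\end{displaymath}
By \Cref{lem:quotient_nonzero}, the cone is isomorphic to $\mathbf{R}h_\ast \mathcal{O}_{\operatorname{Spec}(\kappa(q))}$ for a closed point $q$ of $Y_J$. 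Applying the exact functor $\mathbf{R}(f_\mathcal{J})_\ast$ gives a distinguished triangle on $X$. By \Cref{lem:finite_birational}, $f_\mathcal{J}\circ v = f_\mathcal{P}$, so the composition of derived pushforwards gives
\begin{displaymath}
    \mathbf{R}(f_\mathcal{J})_\ast \mathbf{R}v_\ast \mathcal{O}_{Y_P} \cong \mathbf{R}(f_\mathcal{P})_\ast \mathcal{O}_{Y_P}.
\end{displaymath}
Likewise, the third term becomes $\mathbf{R}(f_\mathcal{J}\circ h)_\ast \mathcal{O}_{\operatorname{Spec}(\kappa(q))}$.

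It remains to identify $f_\mathcal{J}\circ h$ with $i$ up to the isomorphism $\operatorname{Spec}(\kappa(q))\cong \operatorname{Spec}(k)$. From the proof of \Cref{lem:quotient_nonzero}, $q$ corresponds to the maximal ideal $\mathfrak{q}$ of $R[\frac{J}{a^3}]$ with $\mathfrak{q}\cap R = \mathfrak{m}$. The composite $R \to R[\frac{J}{a^3}] \to R[\frac{J}{a^3}]/\mathfrak{q} \cong k$ of \eqref{eq:residue_field_isomorphism} is the quotient map $R\to R/\mathfrak{m}$, because $\theta$ restricted to $R$ is the quotient map. Hence $\kappa(q)\cong k$ as $R$-algebras, and $f_\mathcal{J}\circ u^J_{a^3}\circ h_q$ equals $i$ composed with this isomorphism. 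Since $i$ is a closed immersion, hence affine, $\mathbf{R}i_\ast = i_\ast$, and so the third term is $\mathbf{R}i_\ast \mathcal{O}_{\operatorname{Spec}(k)}$. Rotating nothing further, this yields exactly the triangle in the statement, with the first map the pushforward of the unit.

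The only point needing care is this identification of the morphism $\operatorname{Spec}(\kappa(q)) \to X$ with $i$, and in particular checking that the isomorphism $\kappa(q)\cong k$ is compatible with the $R$-algebra structures. This follows directly from the construction of $\theta$ in \eqref{eq:factor_affine_blowup_algebras}, so I expect no real obstacle; the rest is formal, using functoriality of derived pushforward for compositions.
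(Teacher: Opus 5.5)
Your proposal is correct and follows the paper's proof. You push the triangle $\mathcal{O}_{Y_J}\to \mathbf{R}v_\ast\mathcal{O}_{Y_P}\to \mathbf{R}(u^J_{a^3}\circ h_q)_\ast\mathcal{O}_{\operatorname{Spec}(\kappa(q))}$ forward along $f_\mathcal{J}$, use $f_\mathcal{J}\circ v=f_\mathcal{P}$, and identify $\operatorname{Spec}(\kappa(q))\to X$ with $i$ up to isomorphism because $\theta$ restricts to the quotient map $R\to k$; the only cosmetic difference is that the paper first invokes closedness of $f_\mathcal{J}$ to see that $q$ lies over $\mathfrak{m}$ before producing the factorization, whereas you read it directly off the ring map.
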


\begin{proof}
    As $f_{\mathcal{J}}$ is a closed morphism, it follows that $(f_{\mathcal{J}} \circ u^J_{a^3} \circ h_q)(q)$ is a closed point in $X$. 
    However, $R$ is a local ring, and so $(f_{\mathcal{J}} \circ u^J_{a^3} \circ h_q)(q) = \mathfrak{m}$. 
    Hence, there exists a morphism $w$ fitting into a commutative diagram (see \cite[\href{https://stacks.math.columbia.edu/tag/01J5}{Tag 01J5}]{stacks-project})
    \begin{displaymath}
        \begin{tikzcd}
            {\operatorname{Spec}(\kappa(q))} & {\operatorname{Spec}(k)} \\
            {\underline{\operatorname{Proj}}_X (\bigoplus^\infty_{n=0} \mathcal{J}^n)} & {X.}
            \arrow["w", from=1-1, to=1-2]
            \arrow["{u^J_{a^3} \circ h_q}"', from=1-1, to=2-1]
            \arrow["i", from=1-2, to=2-2]
            \arrow["{f_{\mathcal{J}}}"', from=2-1, to=2-2]
        \end{tikzcd}
    \end{displaymath}
    We claim that $w$ is an isomorphism. 
    By the construction of $\theta$ in \eqref{eq:factor_affine_blowup_algebras}, its restriction to $R$ is the quotient ring homomorphism $R\to k$ and $\ker (\theta) = \mathfrak{q}$.
    Thus, $\theta$ induces an isomorphism of $R$-algebras $R[\frac{J}{a^3}]/\mathfrak{q} \to k$.
    Consequently, the natural ring homomorphism $k \to \kappa(q)=R[\frac{J}{a^3}]/\mathfrak{q}$ is an isomorphism, and hence so is $w$. 
    Therefore, from \eqref{eq:finite_birational} and \eqref{eq:quotient_nonzero_final}, it follows that 
    \begin{displaymath}
        \begin{aligned}
            \operatorname{cone}
            & (\mathbf{R}(f_\mathcal{J})_\ast \mathcal{O}_{\underline{\operatorname{Proj}}_X (\bigoplus^\infty_{n=0} \mathcal{J}^n)} \to \mathbf{R}(f_\mathcal{P})_\ast \mathcal{O}_{\underline{\operatorname{Proj}}_X (\bigoplus^\infty_{n=0} \mathcal{P}^n)}) 
            \\& \cong \mathbf{R}(f_{\mathcal{J}} \circ u^J_{a^3} \circ h_q)_\ast \mathcal{O}_{\operatorname{Spec}(\kappa(q))}
            \\&\cong \mathbf{R}i_\ast \mathcal{O}_{\operatorname{Spec}(k)}.
        \end{aligned}
    \end{displaymath}
    This completes the proof.
\end{proof}

\begin{lemma}
    \label{lem:section_retract_adjunction}
    Let $L \colon \mathcal{C} \rightleftarrows \mathcal{D} \colon R$ be an adjoint pair of functors between categories. 
    If $c$ is a retract of $R (d)$ in $\mathcal{C}$, then the unit map $\eta_c \colon c \to R L (c)$ is a split monomorphism in $\mathcal{C}$.
    A similar statement can be made for the counit being a split epimorphism.
\end{lemma}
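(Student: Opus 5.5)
Suppose $c$ is a retract of $R(d)$. Concretely, there are morphisms $s\colon c \to R(d)$ and $p\colon R(d) \to c$ with $p\circ s = \operatorname{id}_c$. The plan is to construct an explicit left inverse $r\colon RL(c) \to c$ of $\eta_c$. The idea is to push $s$ through the monad $RL$ and then come back down using the counit $\varepsilon$ together with $p$.

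Let $s^\flat\colon L(c) \to d$ be the morphism adjoint to $s$. By definition, $s^\flat = \varepsilon_d \circ L(s)$, and $s = R(s^\flat)\circ \eta_c$. Set
\begin{displaymath}
    r := p \circ R(s^\flat) \colon RL(c) \to R(d) \to c.
\end{displaymath}
Then
\begin{displaymath}
    r \circ \eta_c = p \circ R(s^\flat)\circ \eta_c = p \circ s = \operatorname{id}_c,
\end{displaymath}
so $\eta_c$ is a split monomorphism. The only input is the standard formula expressing a morphism through its adjoint transpose, namely $s = R(s^\flat)\circ\eta_c$. This is the triangle identity, equivalently naturality of $\eta$ combined with $R(\varepsilon_d)\circ\eta_{R(d)} = \operatorname{id}_{R(d)}$. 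I would verify this identity explicitly to keep the argument self-contained.

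For the dual statement, suppose $d$ is a retract of $L(c)$ in $\mathcal{D}$, with $s\colon d\to L(c)$, $p\colon L(c)\to d$ and $p\circ s=\operatorname{id}_d$. Let $p^\sharp := R(p)\circ\eta_c\colon c \to R(d)$ be the adjoint of $p$, so that $p = \varepsilon_d\circ L(p^\sharp)$. Then $L(p^\sharp)\circ s$ is a right inverse of $\varepsilon_d$.

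There is no real obstacle here. The only care needed is bookkeeping, namely keeping the directions of the unit and counit straight in the dual case.
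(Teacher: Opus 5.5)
Your proof is correct and is essentially the paper's argument: the paper also factors the section $f\colon c\to R(d)$ through the unit as $f=F\circ\eta_c$ (your $s=R(s^\flat)\circ\eta_c$), and it takes $g\circ F$ as the left inverse of $\eta_c$. Your dual argument, which uses $L(p^\sharp)\circ s$ as a section of $\varepsilon_d$, is also correct and fills in the case the paper only asserts.
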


\begin{proof}
    This is known but we write the proof.
    Choose $f\colon c \to R (d)$ with retraction $g \colon R(d) \to c$, i.e.\ $\operatorname{id}_{c} = g\circ f$. 
    By adjunction, $f$ factors through $\eta_c$, i.e.\ $f = F \circ \eta_c$. 
    Then $\operatorname{id}_c = (g \circ F) \circ \eta_c$.
    This shows the desired claim. 
\end{proof}

\begin{proof}
    [Proof of \Cref{prop:blowup_extensions_for_residue_field}]
    We prove the first claim. 
    Set $Q := (a,b)$.
    Denote by $\mathcal{Q}$ the ideal sheaf obtained by the sheafification of $Q$. 
    Given a morphism $g\colon Z\to W$ of schemes, let $\eta_g$ denote the unit $\mathcal{O}_W \to \mathbf{R}g_\ast \mathcal{O}_Z$. 
    By \cite[Exercise 13.22]{Gortz/Wedhorn:2020}, there exists an $X$-isomorphism of schemes 
    \begin{displaymath}
        f\colon \underline{\operatorname{Proj}}_X (\bigoplus^\infty_{n=0} \mathcal{Q}^n) \to \underline{\operatorname{Proj}}_X (\bigoplus^\infty_{n=0} \mathcal{P}^n).
    \end{displaymath}
    This yields 
    \begin{displaymath}
        \mathbf{R}(f_\mathcal{P})_\ast (\eta_f) \colon \mathbf{R}(f_\mathcal{P})_\ast \mathcal{O}_{\underline{\operatorname{Proj}}_X (\bigoplus^\infty_{n=0} \mathcal{P}^n)} 
        \to \mathbf{R}(f_\mathcal{Q})_\ast \mathcal{O}_{\underline{\operatorname{Proj}}_X (\bigoplus^\infty_{n=0} \mathcal{Q}^n)}.
    \end{displaymath}
    Since $a$ and $b$ form an $R$-regular sequence, \cite[\href{https://stacks.math.columbia.edu/tag/063L}{Tag 063L}]{stacks-project} says the closed immersion associated to $\mathcal{Q}$ is Koszul-regular.
    Then, by \cite[Lemma 2.3]{Thomason:1993}, we have an isomorphism,
    \begin{displaymath}
       \eta_{f_\mathcal{Q}}\colon \mathcal{O}_X \to \mathbf{R}(f_\mathcal{Q})_\ast \mathcal{O}_{\underline{\operatorname{Proj}}_X (\bigoplus^\infty_{n=0} \mathcal{Q}^n)}.
    \end{displaymath}
    Hence, the distinguished triangle from \Cref{lem:blowup_extensions_for_residue_field} can be rewritten as 
    \begin{displaymath}
        \mathbf{R}(f_\mathcal{J})_\ast \mathcal{O}_{\underline{\operatorname{Proj}}_X (\bigoplus^\infty_{n=0} \mathcal{J}^n)} \to \mathcal{O}_X \to \mathbf{R}i_\ast \mathcal{O}_{\operatorname{Spec}(k)} \to  \mathbf{R}(f_\mathcal{J})_\ast \mathcal{O}_{\underline{\operatorname{Proj}}_X (\bigoplus^\infty_{n=0} \mathcal{J}^n)}[1].
    \end{displaymath}
    We show that there exists an isomorphism
    \begin{displaymath}
        \mathbf{R}(f_\mathcal{J})_\ast \mathcal{O}_{\underline{\operatorname{Proj}}_X (\bigoplus^\infty_{n=0} \mathcal{J}^n)} \cong \mathbf{R}i_\ast \mathcal{O}_{\operatorname{Spec}(k)}[-1] \oplus \mathcal{O}_X.
    \end{displaymath}
    Observe that there exists a commutative diagram 
    \begin{displaymath}
        \begin{tikzcd}
            {\mathcal{O}_X} & {\mathbf{R}(f_\mathcal{J})_\ast \mathcal{O}_{\underline{\operatorname{Proj}}_X (\bigoplus^\infty_{n=0} \mathcal{J}^n)}} \\
            & {\mathbf{R}(f_\mathcal{P})_\ast \mathcal{O}_{\underline{\operatorname{Proj}}_X (\bigoplus^\infty_{n=0} \mathcal{P}^n)}} \\
            & {\mathbf{R}(f_\mathcal{Q})_\ast \mathcal{O}_{\underline{\operatorname{Proj}}_X (\bigoplus^\infty_{n=0} \mathcal{Q}^n)}.}
            \arrow["{\eta_{f_\mathcal{J}}}", from=1-1, to=1-2]
            \arrow["{\eta_{f_\mathcal{P}}}"', from=1-1, to=2-2]
            \arrow["{\mathbf{R}(f_\mathcal{J})_\ast \eta_v}", from=1-2, to=2-2]
            \arrow["{\mathbf{R}(f_\mathcal{P})_\ast \eta_f}"', from=2-2, to=3-2]
        \end{tikzcd}
    \end{displaymath}
    Since $\eta_{f_\mathcal{Q}} = \mathbf{R}(f_\mathcal{P})_\ast \eta_f \circ \eta_{f_\mathcal{P}}$ is an isomorphism, \Cref{lem:section_retract_adjunction} says that $\eta_{f_\mathcal{J}}$ splits. 
    Apply the octahedral axiom to $\mathbf{R}(f_\mathcal{P})_\ast \eta_f \circ \mathbf{R}(f_\mathcal{J})_\ast \eta_v$.
    By \Cref{lem:blowup_extensions_for_residue_field}, the cone of the first morphism is $\mathbf{R}i_\ast \mathcal{O}_{\operatorname{Spec}(k)}$.
    On the other hand, the cone of the second morphism is the zero object.
    Hence, we obtain 
    \begin{displaymath}
        \operatorname{cone}(\mathbf{R}(f_\mathcal{P})_\ast \eta_f \circ \mathbf{R}(f_\mathcal{J})_\ast \eta_v)
        \cong \mathbf{R}i_\ast \mathcal{O}_{\operatorname{Spec}(k)}.
    \end{displaymath}
    Again apply the octahedral axiom to 
    \begin{displaymath}
        \begin{tikzcd}
            {\mathcal{O}_X} & {\mathbf{R}(f_\mathcal{J})_\ast \mathcal{O}_{\underline{\operatorname{Proj}}_X (\bigoplus^\infty_{n=0} \mathcal{J}^n)}} \\
            & {\mathbf{R}(f_\mathcal{Q})_\ast \mathcal{O}_{\underline{\operatorname{Proj}}_X (\bigoplus^\infty_{n=0} \mathcal{Q}^n)}.}
            \arrow["{\eta_{f_\mathcal{J}}}", from=1-1, to=1-2]
            \arrow["{\eta_{f_\mathcal{Q}}}"', from=1-1, to=2-2]
            \arrow["{\mathbf{R}(f_\mathcal{P})_\ast \eta_f \circ \mathbf{R}(f_\mathcal{J})_\ast \eta_v}", from=1-2, to=2-2]
        \end{tikzcd}
    \end{displaymath}
    The composite $\mathbf{R}(f_\mathcal{P})_\ast \eta_f \circ \eta_{f_\mathcal{P}}$ is $\eta_{f_\mathcal{Q}}$, which is an isomorphism.
    Hence, we obtain that 
    \begin{displaymath}
        \operatorname{cone}(\eta_{f_\mathcal{J}}) 
        \cong  \operatorname{cone}(\mathbf{R}(f_\mathcal{P})_\ast \eta_f \circ \mathbf{R}(f_\mathcal{J})_\ast \eta_v)[-1] 
        \cong \mathbf{R}i_\ast \mathcal{O}_{\operatorname{Spec}(k)} [-1].
    \end{displaymath}
    Thus, the first claim follows.

    Now, we prove the second claim. 
    If $X$ is regular, then $D^b_{\operatorname{coh}}(X)=\operatorname{Perf}(X)$ \cite[\href{https://stacks.math.columbia.edu/tag/0FDC}{Tag 0FDC}]{stacks-project}.
    Hence, the derived pushforward of the structure sheaf along the blowup of $J$ is perfect.
    We prove the converse.
    Assume that the derived pushforward of the structure sheaf along the blowup of $J$ is perfect.
    By the first claim, $\mathbf{R}i_\ast \mathcal{O}_{\operatorname{Spec}(k)}[-1]$ is a direct summand of a perfect complex.
    Since $\operatorname{Perf}(X)$ is closed under direct summands and shifts, $\mathbf{R}i_\ast \mathcal{O}_{\operatorname{Spec}(k)}$ is a perfect complex. 
    Applying \cite[\href{https://stacks.math.columbia.edu/tag/066Q}{Tags 066Q} \& \href{https://stacks.math.columbia.edu/tag/00OC}{00OC}]{stacks-project}, we are done.
\end{proof}

\begin{lemma}
    \label{lem:perfect_finite_modification}
    Let $f\colon Y:=\operatorname{Spec}(S) \to X:=\operatorname{Spec}(R)$ be a finite birational morphism of integral Noetherian schemes. 
    If $\mathbf{R}f_\ast \mathcal{O}_Y \in \operatorname{Perf}(X)$, then the unit $\mathcal{O}_X \to \mathbf{R}f_\ast \mathcal{O}_Y$ is an isomorphism.
\end{lemma}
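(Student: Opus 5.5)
Since $f$ is affine (indeed finite), $\mathbf{R}f_\ast \mathcal{O}_Y \cong f_\ast \mathcal{O}_Y = \widetilde{S}$, where $S$ is viewed as a finitely generated $R$-module. Birationality of a finite morphism of integral schemes means that $R \to S$ is injective and induces an isomorphism $\operatorname{Frac}(R)\cong \operatorname{Frac}(S)$; we will use this as in \cite[\href{https://stacks.math.columbia.edu/tag/0CC1}{Tag 0CC1}]{stacks-project}. The statement then reduces to a purely algebraic claim: if $R\subseteq S\subseteq \operatorname{Frac}(R)$ with $S$ module-finite over $R$ and of finite projective dimension over $R$, then $R=S$. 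Injectivity already gives a short exact sequence $0\to R\to S\to S/R\to 0$, so we must show $C:=S/R=0$.

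\emph{Step 1: localize.} Both perfectness and the vanishing of $C$ are local on $X$, so we may replace $R$ by $R_\mathfrak{p}$ for a prime $\mathfrak{p}\in\operatorname{Supp}(C)$, chosen minimal in $\operatorname{Supp}(C)$. Localization preserves the hypotheses: $S_\mathfrak{p}$ is still finite and birational over $R_\mathfrak{p}$, and it is a domain. We may therefore assume $(R,\mathfrak{m})$ is local and $C$ has finite length and is nonzero, aiming for a contradiction. Since $S$ is a perfect module over a local ring, it has a finite free resolution, so it has a well-defined rank. That rank equals $\dim_{\operatorname{Frac}(R)} S\otimes_R \operatorname{Frac}(R)=1$ by birationality.

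\emph{Step 2: compare $S$ with $R$.} The key point, and the main obstacle, is to exploit finite projective dimension. First I would try the following route. Since $S$ is a domain that is finite over $R$, every nonzero element of $\mathfrak{m}$ is a nonzerodivisor on $S$. If $\dim R\ge 1$, then $\operatorname{depth}_R S\geq 1$. More usefully, $S$ is a semilocal ring finite over $R$, and $\operatorname{End}_R(S)\supseteq S$. A cleaner approach uses the rank-one free-resolution trick: a perfect module $M$ of rank $1$ over a local domain has a determinant (MacRae invariant / first Fitting ideal). I would use the fact that a finite torsion-free module of rank one with a finite free resolution is isomorphic to an ideal $I$ of $R$ (namely $S\cong xS\subseteq R$ for a common denominator $x$), and that an ideal of finite projective dimension has an invertible "content". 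Concretely, by Auslander--Buchsbaum theory (Vasconcelos/Burch), the first Fitting invariant gives $I = \delta\cdot I'$ with $\delta$ a nonzerodivisor and $\operatorname{grade}(I')\geq 2$.

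\emph{Step 3: conclude.} Translating back, $S\cong \delta^{-1} I'$ with $\operatorname{grade} I'\ge2$. The ring structure forces $S\cdot S\subseteq S$, and from $1\in S$ we get $\delta\in I'$. Since $I'$ has grade $\geq 2$, the inclusion $R\to \operatorname{Hom}_R(I',R)$ is an isomorphism, so elements $s\in S$ with $sI'\subseteq \ldots$ land in $R$. Here is where I would check the details most carefully. The expected conclusion is $S\subseteq R$, so $C=0$. If this route proves messy, I would fall back on a length argument in the finite-length situation from Step 1. Take Euler characteristics of the perfect complex, using $\chi$ (intersection multiplicity) with a system of parameters: $S$ and $R$ have the same rank, and $C$ has finite length and finite projective dimension. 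The New Intersection Theorem / Roberts' vanishing then forces $\dim R\le \operatorname{pd} C\le \operatorname{pd}S+1$. Combined with $\operatorname{depth} S\ge \min(2,\dim R)$ when $C\neq0$ is of finite length (from the depth lemma applied to $0\to R\to S\to C\to 0$), this should yield a contradiction in dimension $\geq 2$. Dimension $\le 1$ is handled directly: there a perfect torsion-free module is free, rank one gives $S\cong R$ as modules, and then $S=xR$ with $1\in xR$ and $S$ a ring forces $x$ to be a unit.
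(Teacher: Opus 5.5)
Your main route is correct, and it differs genuinely from the paper's. The paper imports a splitting lemma: the unit $\mathcal{O}_X\to\mathbf{R}f_\ast\mathcal{O}_Y$ splits once $\mathbf{R}f_\ast\mathcal{O}_Y$ is perfect. It then observes that $f_\ast\mathcal{O}_Y/\mathcal{O}_X$ is a torsion direct summand of the torsion-free sheaf $f_\ast\mathcal{O}_Y$, hence zero.

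For $R\subseteq S\subseteq K:=\operatorname{Frac}(R)$, every $R$-linear map $S\to R$ is multiplication by some $c\in K$, and a retraction forces $c=c\cdot 1=1$. So here splitting is already equivalent to $S=R$, and the substance of the paper's proof sits in the cited lemma.

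You instead prove the finite birational case directly. After localizing, $S$ is a rank-one fractional ideal with a finite free resolution. The factorization $I=\delta I'$, with $\delta$ a nonzerodivisor and $I'=R$ or $\operatorname{grade}I'\geq 2$, is MacRae's theorem on the invariant $G(R/I)$. It reduces everything to $\operatorname{Hom}_R(I',R)=R$. This costs a classical but nontrivial structure theorem, and buys a self-contained commutative-algebra argument that uses birationality from the outset.

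Your Step 3 closes in one line once the normalization is fixed. From $xS=I=\delta I'$ you get $S=uI'$ with $u=\delta/x\in K^\times$, not $S=\delta^{-1}I'$; the fact $\delta\in I'$ is not needed. For $s\in S$, $sS\subseteq S$ gives $suI'\subseteq uI'$, i.e.\ $sI'\subseteq I'\subseteq R$. Hence $s\in(R:_K I')=\operatorname{Hom}_R(I',R)=R$, the last equality because $\operatorname{Ext}^i_R(R/I',R)=0$ for $i\leq 1$.

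In the fallback, one claimed input is wrong. The depth lemma does not give $\operatorname{depth}_R S\geq\min(2,\dim R)$; the local cohomology sequence only gives $H^1_{\mathfrak{m}}(S)\cong H^1_{\mathfrak{m}}(R)/C$, which can be nonzero. But $\operatorname{depth}_R S\geq 1$ suffices. Auslander--Buchsbaum and Roberts' theorem give $\operatorname{pd}C=\operatorname{depth}R=\dim R=d$, while $\operatorname{pd}C\leq\max(\operatorname{pd}S,1)\leq\max(d-1,1)<d$ for $d\geq 2$.

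In fact neither MacRae nor the New Intersection Theorem is needed. Take $\mathfrak{p}$ minimal in $\operatorname{Supp}(C)$; then $\mathfrak{p}\neq 0$, so $\operatorname{depth}R_\mathfrak{p}\geq 1$.
\begin{itemize}
\item If $\operatorname{depth}R_\mathfrak{p}=1$, Auslander--Buchsbaum makes $S_\mathfrak{p}$ free of rank one, hence $S_\mathfrak{p}=R_\mathfrak{p}$ by your ring argument.
\item If $\operatorname{depth}R_\mathfrak{p}\geq 2$, then $\operatorname{Ext}^1_{R_\mathfrak{p}}(C_\mathfrak{p},R_\mathfrak{p})=0$, since $C_\mathfrak{p}$ has finite length. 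So $0\to R_\mathfrak{p}\to S_\mathfrak{p}\to C_\mathfrak{p}\to 0$ splits, and the paper's torsion argument kills $C_\mathfrak{p}$.
\end{itemize}
This version uses perfectness only at primes of depth one, and gets the paper's splitting for free at the others.
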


\begin{proof}
    By \cite[Lemma 3.16]{Lank/Venkatesh:2026} (or equivalently, \cite[Lemma 2.4]{DeDeyn/Lank/Lank/ManaliRahul/Venkatesh:2026}), the unit $\mathcal{O}_X \to \mathbf{R}f_\ast \mathcal{O}_Y$ splits.
    Moreover, $f$ is finite, and so \cite[\href{https://stacks.math.columbia.edu/tag/0G9R}{Tag 0G9R}]{stacks-project} says the canonical morphism $f_\ast \mathcal{O}_Y \to \mathbf{R}f_\ast \mathcal{O}_Y$ is an isomorphism.
    Hence, the morphism $\mathcal{O}_X \to f_\ast \mathcal{O}_Y$ splits.
    Since $f$ is a proper birational morphism of integral Noetherian schemes, \cite[Proposition 7.4.5]{Grothendieck/Dieudonne:1971} implies that $f_\ast \mathcal{O}_Y$ is torsion free.
    Moreover, $\mathcal{O}_X \to f_\ast \mathcal{O}_Y$ is generically an isomorphism because $f$ is birational.
    Thus, the cokernel of $\mathcal{O}_X \to f_\ast \mathcal{O}_Y$ is a torsion $\mathcal{O}_X$-module. 
    On the other hand, the splitting $\mathcal{O}_X \to f_\ast \mathcal{O}_Y$  yields that $f_\ast \mathcal{O}_Y / \mathcal{O}_X$ is a direct summand of the torsion free sheaf $f_\ast \mathcal{O}_Y$.
    Consequently, $f_\ast \mathcal{O}_Y / \mathcal{O}_X \cong 0$, which completes the proof.
\end{proof}

\begin{proposition}
    \label{prop:characterize_regularity_local}
    \Cref{thm:characterize_regularity} is true if $R$ is a local ring.
\end{proposition}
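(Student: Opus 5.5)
The forward direction is immediate: if $R$ is a regular local ring, then $D^b_{\operatorname{coh}}(X)=\operatorname{Perf}(X)$ by \cite[\href{https://stacks.math.columbia.edu/tag/0FDC}{Tag 0FDC}]{stacks-project}. For any blowup $f\colon X^\prime\to X$, the morphism is proper, so $\mathbf{R}f_\ast\mathcal{O}_{X^\prime}\in D^b_{\operatorname{coh}}(X)$, and hence it is perfect. The content is the converse. Assume that $\mathbf{R}f_\ast \mathcal{O}_{X^\prime}\in\operatorname{Perf}(X)$ for every blowup of the local domain $(R,\mathfrak{m},k)$. The plan is first to show $R$ is normal, and then to split into cases according to $\operatorname{depth}$, hence Krull dimension.

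\emph{Step 1 (normality).} Let $S$ be any finite $R$-subalgebra of $\operatorname{Frac}(R)$ containing $R$. I would realize $\operatorname{Spec}(S)\to X$ as a blowup. Write $S=R[s_1/g,\ldots,s_n/g]$ with a common denominator $0\neq g\in R$. Since $S$ is a finite $R$-module, some nonzero $h$ satisfies $hS\subseteq R$, so $hS\cong S$ is a fractional ideal isomorphic to an ideal $I$ of $R$. The blowup of $X$ along $I$ is then $\operatorname{Spec}(S)$. This holds because $I\cdot S = hS$ is invertible (principal) on $\operatorname{Spec}(S)$, and conversely the blowup is finite here. I expect to check this by identifying the Rees algebra chart $R[\frac{I}{h}]=S$, using \Cref{rmk:represented_affine_blowup_algebras} together with the fact that $I/h=S$ is already a ring, so every chart is contained in this one.

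With this identification, \Cref{lem:perfect_finite_modification} gives $R=S$. Applying it to $S=R[x]$ for each $x$ integral over $R$ shows that $R$ is normal. This step is the one I expect to be the main obstacle. The subtle point is verifying carefully that the blowup along the ideal $hS$ is exactly $\operatorname{Spec}(S)$, including that the other affine charts $R[\frac{I}{d}]$ are open subschemes of it rather than contributing new points.

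\emph{Step 2 (low dimension).} If $\dim R\le 1$, a normal Noetherian local domain is a field or a DVR, hence regular.

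\emph{Step 3 (dimension at least $2$).} By Serre's criterion, normality gives $(S_2)$, so $\operatorname{depth} R\ge 2$. Choose an $R$-regular sequence $a,b\in\mathfrak{m}$; this places us in \Cref{setup:main}. Now take the specific blowup along $J=(a^3,ab^2,b^3)+a^2b\mathfrak{m}$ from \Cref{prop:blowup_extensions_for_residue_field}. Its derived pushforward of the structure sheaf is perfect by hypothesis, so the ``moreover'' clause of that proposition shows $X$ is regular. Equivalently, $\mathbf{R}i_\ast\mathcal{O}_{\operatorname{Spec}(k)}$ is a direct summand of a perfect complex, and Auslander--Buchsbaum--Serre applies.
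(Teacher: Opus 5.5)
Your proposal is correct and follows the paper's proof step for step. You prove normality via \Cref{lem:perfect_finite_modification} applied to $\operatorname{Spec}(R[x])\to X$, use field/DVR when $\dim R\le 1$, and otherwise use Serre's criterion to get $\operatorname{depth}R\ge 2$ and conclude with \Cref{prop:blowup_extensions_for_residue_field}. The one difference is that the paper cites Liu's Theorem 8.1.24 to see that the finite birational morphism $\operatorname{Spec}(R[x])\to X$ is a blowup, whereas you check this by hand by blowing up $I=hS$. That check goes through: $R[\frac{I}{h}]=S$, and for $d=hs$ with $0\neq s\in S$ one has $R[\frac{I}{d}]=S_s$, which is the open subset $D(s)$ of $\operatorname{Spec}(S)$.
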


\begin{proof}
    If $X$ is regular, then $D^b_{\operatorname{coh}}(X)=\operatorname{Perf}(X)$ \cite[\href{https://stacks.math.columbia.edu/tag/0FDC}{Tag 0FDC}]{stacks-project}.
    Hence, $\mathbf{R}f_\ast \mathcal{O}_Y$ is perfect for all blowups $f\colon Y \to X$.
    We prove the converse.

    First, we claim that $X$ is normal. 
    Choose $x\in\operatorname{Frac}(R)$ integral over $R$.
    Set $S:=R[x] \subseteq \operatorname{Frac}(R)$. 
    Note that $S$ is an integral domain. 
    Denote by $f\colon \operatorname{Spec}(S)\to \operatorname{Spec}(R)$ the morphism induced by the ring homomorphism $R \to S$. 
    The morphism $f$ is finite, hence projective, and birational.
    By \cite[Theorem 8.1.24]{Liu:2002}, $f$ is a blowup.
    Then $\mathbf{R}f_\ast \mathcal{O}_{\operatorname{Spec}(S)} \in \operatorname{Perf}(X)$, and so \Cref{lem:perfect_finite_modification} implies $R = R[x]$.
    Consequently, every element of $\operatorname{Frac}(R)$ integral over $R$ belongs to $R$.
    Thus, $R$ is normal.

    Now, we finish the proof.
    If $\dim X \leq 1$, then the ring is regular because it is normal. 
    If $\dim X > 1$, then \cite[\href{https://stacks.math.columbia.edu/tag/031S}{Tag 031S}]{stacks-project} implies $\operatorname{depth}(R)\geq 2$.
    By \Cref{prop:blowup_extensions_for_residue_field}, there exists an ideal sheaf $\mathcal{J}\subseteq \mathcal{O}_X$ such that 
    \begin{displaymath}
        \mathbf{R}(f_\mathcal{J})_\ast \mathcal{O}_{\underline{\operatorname{Proj}}_X (\bigoplus^\infty_{n=0} \mathcal{J}^n)} 
        \cong \mathbf{R} i_\ast \mathcal{O}_{\operatorname{Spec}(k)} [-1]\oplus \mathcal{O}_X.
    \end{displaymath}
    Since $\mathbf{R}(f_\mathcal{J})_\ast \mathcal{O}_{\underline{\operatorname{Proj}}_X (\bigoplus^\infty_{n=0} \mathcal{J}^n)}$ is perfect, \Cref{prop:blowup_extensions_for_residue_field} yields that $R$ is regular. 
\end{proof}

\begin{proof}
    [Proof of \Cref{thm:characterize_regularity}]
    As in the proof of \Cref{prop:characterize_regularity_local}, it remains to prove the converse.

    Let $\mathfrak{p} \in X$. 
    Write $s_\mathfrak{p} \colon \operatorname{Spec}(R_{\mathfrak{p}}) \to X$ for the morphism induced by the localization homomorphism $s^\#_\mathfrak{p} \colon R \to R_\mathfrak{p}$. 
    As $R$ is integral, $s^\#_\mathfrak{p}$ is injective. 
    Choose an ideal $A\subseteq R_\mathfrak{p}$.
    Set $B:=(s^\#_\mathfrak{p})^{-1}(A)$.
    Then $A$ is an ideal of $R_\mathfrak{p}$ generated by the image of $B$ (i.e.\ $A = B R_\mathfrak{p}$).
    See \cite[\href{https://stacks.math.columbia.edu/tag/02C9}{Tag 02C9}]{stacks-project}.

    By \cite[\href{https://stacks.math.columbia.edu/tag/0805}{Tag 0805}]{stacks-project}, blowups are compatible with flat base change. 
    Denote by $\mathcal{B}$ and $\mathcal{A}$ respectively the sheafifications of $B$ and $A$.
    This yields an isomorphism
    \begin{displaymath}
        \underline{\operatorname{Proj}}_X (\bigoplus^\infty_{n=0} \mathcal{B}^n) \times_X \operatorname{Spec}(\mathcal{O}_{X,\mathfrak{p}}) 
        \cong \underline{\operatorname{Proj}}_{\operatorname{Spec}(R_{\mathfrak{p}}) } (\bigoplus^\infty_{n=0} \mathcal{A}^n).
    \end{displaymath}
    If $g_\mathcal{A} \colon  \underline{\operatorname{Proj}}_{\operatorname{Spec}(R_{\mathfrak{p}}) } (\bigoplus^\infty_{n=0} \mathcal{A}^n) \to \operatorname{Spec}(R_\mathfrak{p})$ denotes the blowup of $\operatorname{Spec}(R_\mathfrak{p})$ along $\mathcal{A}$, then flat base change yields 
    \begin{displaymath}
        \mathbf{L}s_{\mathfrak{p}}^\ast \mathbf{R} (f_\mathcal{B})_\ast \mathcal{O}_{\underline{\operatorname{Proj}}_X (\bigoplus^\infty_{n=0} \mathcal{B}^n)} 
        \cong \mathbf{R} (g_\mathcal{A})_\ast \mathcal{O}_{\underline{\operatorname{Proj}}_{\operatorname{Spec}(R_\mathfrak{p})} (\bigoplus^\infty_{n=0} \mathcal{A}^n)}.
    \end{displaymath}

    The hypothesis says that 
    \begin{displaymath}
        \mathbf{R} (f_\mathcal{B})_\ast \mathcal{O}_{\underline{\operatorname{Proj}}_X (\bigoplus^\infty_{n=0} \mathcal{B}^n)} \in \operatorname{Perf}(X).
    \end{displaymath}
    By \cite[\href{https://stacks.math.columbia.edu/tag/09UA}{Tag 09UA}]{stacks-project}, $\mathbf{L}s_{\mathfrak{p}}^\ast \mathbf{R} (f_\mathcal{B})_\ast \mathcal{O}_{\underline{\operatorname{Proj}}_X (\bigoplus^\infty_{n=0} \mathcal{B}^n)}$ is perfect. 
    Thus, 
    \begin{displaymath}
        \mathbf{R} (g_\mathcal{A})_\ast \mathcal{O}_{\underline{\operatorname{Proj}}_{\operatorname{Spec}(R_\mathfrak{p})} (\bigoplus^\infty_{n=0} \mathcal{A}^n)}
    \end{displaymath}
    is perfect. 
    By \Cref{prop:characterize_regularity_local}, $R_\mathfrak{p}$ is regular because $A$ was an arbitrary ideal.
    Consequently, as $\mathfrak{p}$ was arbitrary, $R$ is regular.
\end{proof}

\bibliographystyle{alpha}
\bibliography{mainbib}

\end{document}